\theoremstyle{plain}
\newtheorem{thm}{Theorem}[section]
\newtheorem{prop}[thm]{Proposition}
\newtheorem{lem}[thm]{Lemma}
\theoremstyle{definition}
\newtheorem{defn}[thm]{Definition}
\theoremstyle{remark}
\newtheorem{rem}[thm]{Remark}
\newcommand{\bbP}{\mathbb P}
\newcommand{\bbN}{\mathbb N}
\newcommand{\bbE}{\mathbb E}
\newcommand{\1}{\mathbbm 1}
\newcommand{\R}{\mathbb R}
\newcommand{\SphN}{\mathbb S^{N-1}(\sqrt N)}
\newcommand{\norm}[1]{\lVert #1\rVert}
\newcommand{\dd}{\mathrm d}
\DeclareMathOperator{\Var}{Var}
\title{A Sharp Universality Dichotomy for the Free Energy of Spherical Spin Glasses}
\author{Taegyun Kim\thanks{Department of Mathematical Sciences, KAIST, Daejeon, Republic of Korea. Email: \texttt{ktg11k@kaist.ac.kr}.}}
\date{\today}
\begin{document}
\maketitle
\begin{abstract}
We study the free energy for pure and mixed spherical $p$-spin models with i.i.d.\ disorder.
In the mixed case, each $p$-interaction layer is assumed either to have regularly varying tails with exponent $\alpha_p$ or to satisfy a finite $2p$-th moment condition.

For the pure spherical $p$-spin model with regularly varying disorder of tail index $\alpha$, we introduce a tail-adapted normalization that interpolates between the classical Gaussian scaling and the extreme-value scale, and we prove a sharp universality dichotomy for the quenched free energy.
In the subcritical regime $\alpha<2p$, the thermodynamics is driven by finitely many extremal couplings and the free energy converges to a non-degenerate random limit described by the NIM (non-intersecting monomial) model, depending only on extreme-order statistics.
At the critical exponent $\alpha=2p$, we obtain a random one-dimensional TAP-type variational formula capturing the coexistence of an extremal spike and a universal Gaussian bulk on spherical slices.
In the supercritical regime $\alpha>2p$ (more generally, under a finite $2p$-th moment assumption), the free energy is universal and agrees with the deterministic Crisanti--Sommers/Parisi value of the corresponding Gaussian model, as established in \cite{sawhney2024free}.

We then extend the subcritical and critical results to mixed spherical models in which each $p$-layer is either heavy-tailed with $\alpha_p\le 2p$ or has finite $2p$-th moment.
In particular, we derive a TAP-type variational representation for the mixed model, yielding a unified universality classification of the quenched free energy across tail exponents and mixtures.
\end{abstract}

\tableofcontents
\section{Introduction}

Mean-field spin glass models such as the Sherrington--Kirkpatrick (SK) model \cite{SK1975}
were introduced in physics to describe glassy magnetic alloys including AuFe and CuMn \cite{cannella1972magnetic}.
Parisi's replica symmetry breaking theory \cite{Parisi1979PRL,mezard1984nature} led to a deep and remarkably predictive description of their thermodynamics.
This program has by now a rich rigorous counterpart; see the monographs
\cite{talagrand2003spin,talagrand2011mean,panchenko2013sherrington} and references therein.

In this paper we focus on \emph{spherical} spin glasses, where the Gibbs measure is supported on a high-dimensional sphere
and the Hamiltonian is a random polynomial.
For Gaussian disorder, the limiting free energy is given by the Crisanti--Sommers variational formula \cite{crisanti1992spherical},
proved rigorously by Talagrand \cite{talagrand2006spherical}.
Beyond its intrinsic role in spin glass theory, the spherical $p$-spin family is also intertwined with
high-dimensional optimization and inference, including spiked tensor models and tensor PCA
(see, e.g.\ \cite{MontanariRichard2014,lesieur2017statistical}).

A fundamental question is \emph{universality}: to what extent do thermodynamic limits depend on the disorder law beyond low moments?
For the SK and mixed $p$-spin models, disorder universality has been investigated extensively (e.g.\ \cite{CarmonaHu2006,AuffingerChen2016}).
In the spherical setting, sharp universality under minimal moment assumptions---including the borderline $2p$-th moment threshold
for the pure $p$-spin---was recently obtained by Sawhney and Sellke \cite{sawhney2024free}.
These results highlight a robust Gaussian universality class when each $p$-layer has a finite $2p$-th moment.

In contrast, for genuinely heavy-tailed disorder, extreme couplings can dominate and Gaussian universality can fail.
Such phenomena appear already in the physics literature on L\'evy spin glasses \cite{CizeauBouchaud1993,JanzenEngelMezard2010},
and have recently begun to be understood rigorously
(e.g.\ existence/variational results for heavy-tailed mean-field models \cite{JagannathLopatto2024},
and detailed analysis of the L\'evy SK model \cite{ChenKimSen2025}).
From the viewpoint of extreme-value theory, heavy tails naturally bring in regularly varying tails and Poissonian extremes;
see, e.g.\ \cite{BinghamGoldieTeugels1987,Resnick2007,dehaan2006extreme}.

In \cite{kim2025heavy}, the author introduced the non-intersecting monomial (NIM) model and a corresponding reduction method (NIMR)
to analyze mixed spherical models in regimes where a finite family of extreme couplings controls both the free energy and the Gibbs measure.
The goal of the present paper is to complete the universality picture \emph{at and below} the sharp $2p$-moment threshold
for spherical models with regularly varying disorder.
For the pure spherical $p$-spin model with tail index $\alpha$, we introduce a tail-adapted normalization interpolating
between the classical Gaussian scaling and the extreme-value scale, and we prove a sharp dichotomy for the quenched free energy.
In the subcritical regime $\alpha<2p$, the thermodynamics is governed by a finite family of dominant couplings and the free energy
converges to a non-degenerate random limit of NIM type, exhibiting an \emph{extremal universality} depending only on the extreme-order statistics
of the disorder.
At the critical exponent $\alpha=2p$, we derive a \emph{random one-dimensional TAP-type variational formula} describing a competition between
a dominant extremal monomial and a universal Gaussian bulk on spherical slices.
Our approach at criticality exploits TAP representations for spherical spin glasses and related variational principles
\cite{crisanti1995thouless,subag2018free,chen2023generalized,BeliusKistler2019,arous2024shattering}.
In the supercritical regime $\alpha>2p$ (more generally, under a finite $2p$-th moment condition),
the free energy is universal and coincides with the deterministic Gaussian Crisanti--Sommers/Parisi value,
in agreement with \cite{sawhney2024free}.

We then extend the heavy-tail and critical results to \emph{mixed} spherical models in which each $p$-layer is either heavy-tailed
with exponent $\alpha_p\le 2p$ or satisfies a finite $2p$-th moment condition.
In particular, we obtain a TAP-type variational representation for the mixed model, yielding a unified and sharp universality classification
of the quenched free energy for spherical spin glasses across tail exponents and mixtures.

\smallskip
\noindent
\textbf{A conceptual summary.}
For the pure spherical $p$-spin model with regularly varying disorder of tail index $\alpha$, the free energy exhibits a sharp universality dichotomy:
\begin{itemize}[leftmargin=2em]
\item \emph{extremal/NIM universality} ($\alpha<2p$): a finite number of extreme monomials drive the free energy;
\item \emph{Gaussian/Parisi universality} ($\alpha>2p$; in particular finite $2p$-th moment in the regularly varying family):
bulk fluctuations dominate and the free energy converges to the deterministic Gaussian (Crisanti--Sommers/Parisi) value;
\item \emph{critical competition} ($\alpha=2p$): spikes and bulk coexist, yielding a random TAP-type variational limit. 
\item Moreover, this TAP type variational formula also holds for mixed model.
\end{itemize}

\smallskip
\noindent
\textbf{Related work and connections.}

\smallskip
\textbf{Heavy-tailed random matrices: an ``edge vs.\ bulk'' analogy.}
The extremal-vs-bulk trichotomy we obtain has a close conceptual analogue in heavy-tailed random matrix theory:
depending on tail/moment assumptions, the top eigenvalues can be driven by extreme entries and exhibit Poisson statistics,
while in other regimes they are governed by collective bulk behavior; see, for example,
\cite{Soshnikov2004,AuffingerBenArousPeche2009,BenArousGuionnet2008,BelinschiDemboGuionnet2009,BordenaveGuionnet2013}.
For $p=2$ case, the free energy becomes the function of its eigenvalues and this coincides with the largest eigenvalue phase transition between $\alpha=4$. This analogy provides intuition for the threshold $\alpha=2p$ as the point where a finite set of spikes
(extreme couplings) and a universal Gaussian bulk coexist and must be treated jointly. 

\smallskip
\textbf{Heavy tails in modern machine learning.}
Heavy-tailed phenomena have also been repeatedly reported and analyzed in deep learning.
Empirically, layer-wise weight spectra and random-matrix diagnostics suggest regimes of ``heavy-tailed self-regularization''
\cite{MartinMahoney2021}.
From the optimization viewpoint, the noise in stochastic gradients has been argued to exhibit non-Gaussian heavy-tailed behavior,
motivating $\alpha$-stable/L\'evy-driven approximations to SGD dynamics \cite{SimsekliSagunGurbuzbalaban2019};
more recently, diffusion-type approximations of SGD have been shown to generate heavy-tailed parameter distributions
\cite{JiaoKellerRessel2024}.
From a perspective closer to our setting, heavy-tailed spectral limits also arise in random kernel/feature constructions with heavy-tailed weights
\cite{GuionnetPiccolo2026}.
These developments further motivate the study of sharp universality thresholds under power-law tails, and our results provide a mathematically explicit
instance of such thresholds in a canonical mean-field energy landscape.


\section{Model, normalization, and main results}\label{sec:model}

\subsection{Spherical Mixed $p$-spin Model}
Let
\[
S_N:=\SphN=\{ \sigma\in\R^N:\norm{\sigma}^2=N\},
\qquad
\mu_N:=\text{uniform probability measure on }S_N.
\]
Fix an integer $P\ge2$ and mixture coefficients $\gamma_2,\dots,\gamma_P\ge0$.
For each $p\in\{2,\dots,P\}$, we define its disorders $H$. We first consider independent random variables \[
H_{i_1,\ldots i_p}=H^{(p)}_{i_1,\ldots,i_p}, \quad 1\leq i_1\leq\cdots\leq i_p\leq N,
\]
and extend them by permutation symmetry
\[
    H_{i_1,\ldots,i_p}=H_{i_{\pi(1)},\ldots,i_{\pi(p)}}, \quad \forall \pi\in S_p.
\]
For each $p$ we consider independent copies $H_{i_1,\ldots,i_p}$, $1\le i_1\le\cdots\le i_p\le N$, distributed as
\[
  H_{i_1,\ldots,i_p}\stackrel{d}{=} 
  \Bigl(\frac{|\{i_1,\ldots,i_p\}|!}{p!}\Bigr)^{1/2}H_p,\quad |\{i_1,\ldots,i_p\}|!=\prod_{1\le i\le N} k_{i_1,i_2,\cdots,i_p}!
\]
where $k_{i_1,i_2,\cdots,i_p}$ is the number of $i$ in $\{i_1,\ldots,i_p\}$ and 
extended to all permutations of the indices. 
Define the spherical Hamiltonian
\begin{equation}\label{eq:HN}
H_N(\sigma)=
\sum_{p=2}^P \frac{\gamma_p}{N^{(p-1)/2}}
\sum_{1\leq i_1,\cdots,i_p\leq N} H^{(p)}_{i_1,\cdots,i_p} \,\sigma_{i_1}\cdots \sigma_{i_p} .
\end{equation}
For inverse temperature $\beta\ge0$ define, for any measurable Hamiltonian $H:S_N\to\R$, we define its partition function, free energy, and ground state energy respectively:
\[
Z_{N,\beta}(H):=\int_{S_N}\exp\big(\beta H(\sigma)\big)\,\dd\mu_N(\sigma),
\
F_{N,\beta}(H):=\frac1N\log Z_{N,\beta}(H),
\
\mathrm{GSE}_N(H):=\frac1N\sup_{\sigma\in S_N}H(\sigma).
\]
When $H=H_N$ we simply write $Z_{N,\beta}$, $F_{N,\beta}$, and $\mathrm{GSE}_N$.

\subsection{Regular variation and the heavy-tail quantile scale}

\begin{defn}[Regularly varying tails]\label{def:RV}
We say that a real random variable $X$ has a (two-sided) regularly varying tail of index $\alpha>0$ or $X$ is heavy tailed with tail exponent $\alpha$(i.e. $\alpha$-regularly varying) if 
\[
\bbP(|X|>u)=u^{-\alpha}L(u),\qquad u\to \infty,
\]
for some slowly varying function $L$, that is,  $\lim_{x\to\infty}L(tx)/L(x)=1$ holds for each $t$.
\end{defn}

For each $p$, set $M_{N,p}:=\binom{N+p-1}{p}$ and define the quantile scale
\begin{equation}\label{eq:bNp}
d_{N,p}:=\inf\{t>0:\bbP(|H^{(p)}|=|p!^{1/2}H_p|>t)\le M_{N,p}^{-1}\}
\end{equation}
so that $M_{N,p}\,\bbP(|H^{(p)}|>d_{N,p})\approx 1$. We define the normalization factor for including heavy-tailed disorder up to their ratio $c_{N,p}:= d_{N,p}N^{-1/2}$. For $\alpha<2p$, we can easily check that $\lim_{N\to \infty}c_{N,p}=\infty$ holds. 
We further assume the existence of limit $c_p=\lim_{N\to \infty} c_{N,p}$ for $\alpha=2p$, and  we define its normalization factor 
\[
    b_{N,p}:=\begin{cases}
        d_{N,p},\quad if \ c_p=\infty \\ N^{1/2}, \quad if \ c_p\in[0,\infty)   \ or \ \mathbb E[H^{2p}]<\infty.
    \end{cases}
\]
Classical extreme value theory yields a Fr\'echet limit for
\[
\Lambda_{N,p}:=\max_{1\leq i_1\leq\cdots\le i_p\leq N}\frac{{p!}{}|H^{(p)}_{i_1,i_2,\cdots,i_p}|}{\{i_1,\cdots,i_p\}!d_{N,p}}.
\]
Due to Lemma \ref{lem:no-int}, the largest pair must have no repeated indices. Hence, we can consider $\{i_1,\cdots,i_p\}!$ as 1. Furthermore, $\Lambda_{N,p}\to \Lambda_p$ holds for a random variable satisfying $\bbP(\Lambda_p<u)=\exp(-u^{-\alpha_p})$ for $\alpha_p$ heavy tailed layer. 

We further develop suitable normalization for extreme regime as \cite{kim2025heavy} and $2p$-th finite regime in \cite{sawhney2024free}.
\begin{defn}\label{def:HNbar}
    The normalized Hamiltonian including heavy-tailed disorder is defined as
\begin{equation}\label{eq:HNbar}
\bar H_N(\sigma)
=
\sum_{p=2}^P \gamma_p\,\bar H_{N,p}(\sigma),
\qquad
\bar H_{N,p}(\sigma)
:=
\frac{N^{1/2}}{b_{N,p}}\,
\frac{1}{N^{(p-1)/2}}
\sum_{1\leq i_1,\cdots\, i_p\leq N} H^{(p)}_{i_1,i_2,\cdots,i_p}\,\sigma_{i_1}\cdots \sigma_{i_p}.
\end{equation}
Moreover, if $H_p$ has finite variance, we assume that the disorders have the same variance profile as Proposition \ref{prop:adapt-sawhney}.
\end{defn}
When $\bbE|H^{(p)}|^{2p}<\infty$, we have $b_{N,p}= \sqrt N$ and \eqref{eq:HNbar} matches the classical Gaussian scaling.
When $\bbE|H^{(p)}|^{2p}=\infty$ and having regularly varying tail of index $\alpha_p\leq 2p$, $b_{N,p}$ is of order $M_{N,p}^{1/\alpha_p}$ and \eqref{eq:HNbar} matches the extreme-value scale used in \cite{kim2025heavy}.

\subsection{Gaussian Parisi/Crisanti--Sommers free energy}
When the disorder $H$ in \eqref{eq:HN} follows the Gaussian distribution, then their free energy behavior is already known as spin glass theory. 
Let 
\[
\xi(t):=\sum_{p=2}^P \gamma_p^2\, t^p,\qquad t\in[-1,1].
\]
For Gaussian disorder with covariance $N\xi(\langle\sigma,\tau\rangle/N)$, the limiting free energy exists and is given by the spherical Parisi/Crisanti--Sommers value \cite{crisanti1992spherical,talagrand2006spherical}, denoted here by
\[
\mathcal P(\xi;\beta).
\]
(See \Cref{sec:gaussian} for a quick reminder of the variational formula.)

\subsection{A single-monomial free energy functional}

The NIM reduction shows that extremal couplings behave, to first order, like a finite family of non-intersecting monomials. The basic object is therefore the free energy of a single $p$-monomial of strength $h\ge0$.

\begin{defn}[Monomial free energy functional]\label{def:fp}
For $p\ge2$ and $h\ge0$, define
\begin{equation}\label{eq:fp-def}
f_p(h):=\sup_{q\in[0,1)}\left\{\frac12\log(1-q)+h\left(\frac{q}{p}\right)^{p/2}\right\}.
\end{equation}
Define also the zero-temperature functional
\[
g_p(h):=\sup_{q\in[0,1]} h\left(\frac{q}{p}\right)^{p/2}= \frac{h}{p^{p/2}}.
\]
\end{defn}

In \cite{kim2025heavy}, $f_p$ is expressed in a closed form. In this paper, we require TAP type approach and this gives the same value but different variational type formula; see \Cref{sec:nim}.

\subsection{Main theorems (pure and mixed)}

We state the results first in the pure case, and then in mixed generality.
We assume the $p$-layer disorder is symmetric and $\alpha$-regularly varying.

\begin{thm}[Pure $p$-spin: sharp universality dichotomy and critical competition]\label{thm:pure-main}
Fix $p\ge2$, $\gamma_p=1$, and $\gamma_r=0$ for $r\neq p$, and consider the normalized Hamiltonian $\bar H_{N,p}$ as Definition \ref{def:HNbar}.
Assume that $H^{(p)}$ is $\alpha$-regularly varying.
Fix $\beta\in(0,\infty)$.

\smallskip
\noindent\emph{(i) Subcritical heavy tails $\alpha<2p$.}
Then, with $\Lambda_{N,p}$ as in \eqref{eq:bNp},
\[
F_{N,\beta}(\bar H_{N,p}) \Rightarrow f_p\big(\beta\,\Lambda\big),
\qquad
\mathrm{GSE}_N(\bar H_{N,p}) \Rightarrow g_p(\Lambda),
\]
where $\Lambda$ is a Fr\'echet random variable of index $\alpha$ (the limit of $\Lambda_{N,p}$).

\smallskip
\noindent\emph{(ii) Supercritical/light tails $\alpha>2p$ (in particular $\bbE|H^{(p)}|^{2p}<\infty$).}
Then $F_{N,\beta}(\bar H_{N,p})\to \mathcal P(t^p;\beta)$ in probability and in $L^1$, and $\mathrm{GSE}_N(\bar H_{N,p})\to \lim_{\beta\to\infty}\mathcal P(t^p;\beta)$.

\smallskip
\noindent\emph{(iii) Critical tails $\alpha=2p$.}
Let $\Lambda_{N,p}:=\max_{i_1,i_2,\cdots,i_p} |p!H^{(p)}_{i_1,i_2,\cdots,i_p}|/d_{N,p}$ and $c_{N,p}=d_{N,p}N^{-1/2}$.
If $c_{N,p}\to c_p= \infty$, then this follows the same law as (i).\\
For $c_{p}\in[0,\infty)$, we have the following TAP like variational formula. 
Define the cavity-shifted covariance (for the pure model) by
\[
\eta_q(t):=(q+(1-q)t)^p-q^p,\qquad q\in[0,1),\ t\in[-1,1].
\]
Then
\begin{equation}\label{eq:critical-pure-formula}
F_{N,\beta}(\bar H_{N,p})
=
\sup_{q\in[0,1)}\left\{
\frac12\log(1-q)
+
\mathcal P\big(\eta_q;\beta\big)
+
\beta\,c_{N,p}\,\Lambda_{N,p}\left(\frac{q}{p}\right)^{p/2}
\right\}
+o_{\bbP}(1).
\end{equation}
In particular, along any subsequence along which $(c_{N,p},\Lambda_{N,p})\Rightarrow (c_p,\Lambda)$, the free energies converge in distribution to the same functional with $(c_{N,p},\Lambda_{N,p})$ replaced by $(c_p,\Lambda)$.
\end{thm}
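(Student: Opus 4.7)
The plan is to handle the three regimes of Theorem~\ref{thm:pure-main} by closely related means. Parts (i) and (ii) are essentially corollaries of tools already in the literature—the NIM reduction of \cite{kim2025heavy} and the Sawhney--Sellke universality \cite{sawhney2024free}—while part (iii) is the genuinely new assertion and deserves the main attention.

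For (i), $c_{N,p}\to\infty$ with $b_{N,p}=d_{N,p}$ shrinks every non-extremal coupling to $o(1)$ uniformly on $S_N$. A truncation/second-moment argument in the spirit of the NIM reduction of \cite{kim2025heavy} then gives $\bar H_{N,p}(\sigma)=\Lambda_{N,p} N^{-(p-2)/2}\sigma_{i_1^*}\cdots\sigma_{i_p^*}+o_{\bbP}(1)$, with the argmax indices a.s.\ distinct by Lemma~\ref{lem:no-int}. Laplace's method applied to the $p$-fold marginal of $(\sigma_{i_1^*},\ldots,\sigma_{i_p^*})$ under $\mu_N$ then yields the sup in \eqref{eq:fp-def}, hence $f_p(\beta\Lambda)$; the zero-temperature case reduces to the AM--GM bound $|\sigma_{i_1^*}\cdots\sigma_{i_p^*}|\le(N/p)^{p/2}$. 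For (ii), $\bbE|H^{(p)}|^{2p}<\infty$ together with Proposition~\ref{prop:adapt-sawhney} brings the model into the form covered by \cite{sawhney2024free}, which is invoked directly. The sub-case $c_p=\infty$ of (iii) falls under (i) verbatim.

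The core of the proof is the case $c_p\in[0,\infty)$ of (iii), where the top coupling $h^*$ has the same order as the typical Gaussian scale ($b_{N,p}=\sqrt N$). I will start from the split
\[
\bar H_{N,p}(\sigma)=h^*\,\sigma_{i_1^*}\cdots\sigma_{i_p^*}+\bar H_{N,p}^{\mathrm{rest}}(\sigma),\qquad h^*=\frac{N^{1/2}}{b_{N,p}}\cdot\frac{p!\,H^{(p)}_{i_1^*,\ldots,i_p^*}}{N^{(p-1)/2}},
\]
arranged so that $h^*\prod_k x_k/N$ at the symmetric point $x_k^2=Nq/p$ equals $c_{N,p}\Lambda_{N,p}(q/p)^{p/2}$. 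I then integrate out the $p$ extreme coordinates via the disintegration $d\mu_N\propto(1-\tfrac1N\sum_k x_k^2)^{(N-p-2)/2}\,dx_1\cdots dx_p\otimes d\mu^{\mathrm{slice}}_{N-p}$. Expanding monomials of $\bar H_{N,p}^{\mathrm{rest}}$ by how many indices lie in $\{i_1^*,\ldots,i_p^*\}$, a direct covariance computation on the symmetric slice shows that the residual Hamiltonian in rescaled coordinates $\tilde\tau$ has covariance kernel $N\eta_q(\langle\tilde\tau,\tilde\tau'\rangle/N)$, with $\eta_q$ as in the statement. Applying \cite{sawhney2024free} to this slice model (after the truncation below) yields $\tfrac1N\log Z^{\mathrm{slice}}_{N-p}\to\mathcal P(\eta_q;\beta)$ uniformly on compact subsets of $[0,1)$; the Jacobian contributes $\tfrac12\log(1-q)$ per site; and a Laplace argument in the $p$ variables $x_k$, whose integrand is symmetric in $|x_k|$ and exponential in $\prod_k x_k$, concentrates on the symmetric ray $x_k^2=Nq/p$ and produces the announced $\sup_q$ formula modulo $o_{\bbP}(1)$. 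Convergence in distribution along subsequences with $(c_{N,p},\Lambda_{N,p})\Rightarrow(c_p,\Lambda)$ then follows by continuity of the variational functional.

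The main obstacle is this bulk universality step on the slice: after removing only the top coupling the residual disorder is still $2p$-regularly varying and its $2p$-th moment may be only logarithmically divergent, so \cite{sawhney2024free} does not apply verbatim. I plan to resolve this by a two-scale truncation—peel off the top $K_N\to\infty$, $K_N=o(N^{\varepsilon})$ extreme couplings, apply Sawhney--Sellke to the strictly sub-$2p$-moment remainder, and control the contribution of the peeled couplings via Poissonian extreme-value statistics. By Lemma~\ref{lem:no-int} these $K_N$ peeled couplings are non-intersecting with the top and enter the slice only as a small Gaussian-type perturbation of $\eta_q$ whose free-energy contribution vanishes; carrying out this joint truncation carefully is the heart of the TAP reduction at criticality.
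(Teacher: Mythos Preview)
Your handling of (i), (ii), and the $c_p=\infty$ sub-case of (iii) is essentially the paper's, and your identification of the main obstacle in (iii)---that after removing only finitely many top couplings the residual disorder still has infinite $2p$-th moment---is exactly right. The paper's resolution, however, differs from your proposed two-scale truncation in two linked ways, and the second of these exposes a genuine gap in your plan.

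First, rather than insisting on a \emph{strictly} sub-$2p$-moment remainder, the paper relaxes the universality input: Proposition~\ref{prop:adapt-sawhney} is an adaptation of \cite{sawhney2024free} that tolerates $\bbE|\widehat J|^{2p}\le CN^{\varepsilon}$ together with an a.s.\ bound $|\widehat J|\le N^{1/2-\varepsilon}$. A single truncation at the level $u_{N,p}=b_{N,p}N^{-\varepsilon_0/\alpha}$ then makes the bulk fit these hypotheses directly (Lemma~\ref{lem:bulk-moment-bounds}), so no second peeling scale is needed.

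Second---and this is the gap---you slice only over the $p$ coordinates of the \emph{single} top coupling and then assert that the remaining $K_N-1$ peeled spikes ``enter the slice only as a small Gaussian-type perturbation of $\eta_q$ whose free-energy contribution vanishes.'' This is not true when $c_p\in(0,\infty)$: each sub-top spike $h_k$ is of order $c_{N,p}\Lambda_{N,p}^{(k)}$ with $\Lambda_{N,p}^{(k)}\Rightarrow\Lambda_k>0$ a.s., and on the slice of radius $\sqrt{(1-q)N}$ it contributes $\beta c_{N,p}\Lambda_{N,p}^{(k)}((1-q)/p)^{p/2}$ to the ground-state energy density, which is $\Theta(1)$. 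So the slice free energy is \emph{not} $\mathcal P(\eta_q;\beta)+o_{\bbP}(1)$; it is itself a spiked model. What is true is that these sub-top spikes are \emph{dominated}, not negligible: allocating squared mass across several disjoint monomials is never better than concentrating it on the largest one, by AM--GM and convexity of $q\mapsto q^{p/2}$. The paper implements this by slicing over \emph{all} spike coordinates (the support $I_\star$ of $H_N^A$, of size $d=O(N^{2\varepsilon_0})$), so that every spike monomial is a function of the frozen variables $x$ alone; the bulk on the slice is then genuinely spike-free and Proposition~\ref{prop:bulk-slices} gives $\mathcal P(\eta_q;\beta)$ uniformly, while Lemma~\ref{lem:amgm} yields $\sup_{\|x\|^2=qN}H_N^A(x)=E_N^{\rm NIM}(q)$ with the maximum attained by a single top spike. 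The lower bound then picks $x^{(q)}$ supported on the top-spike block, at which all other spike monomials vanish identically. Slicing over a growing number $d$ of coordinates does force you to prove slice-uniformity over a $d$-dimensional parameter (handled in the paper by a net argument plus Lipschitz control, Lemmas~\ref{lem:Lipschitz-slice}--\ref{lem:uniform-slice-net}), but this is routine once $d=N^{o(1)}$; it is a much smaller price than trying to argue away $\Theta(1)$ spikes on the slice.
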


\begin{rem}
The variational formula \eqref{eq:critical-pure-formula} dominates both endpoints:
taking $q=0$ yields the Gaussian Parisi value $\mathcal P( t^p;\beta)$, while dropping the Parisi term yields the monomial/NIM value $f_p(\beta\Lambda_{N,p})$.
Thus criticality produces a genuine competition between a universal bulk and an extreme spike.
\end{rem}

\begin{thm}[Mixed model: extremal terms vs.\ Parisi bulk]\label{thm:mixed-main}
Consider the mixed normalized Hamiltonian $\bar H_N$ in \eqref{eq:HNbar} with coefficients $(\gamma_p)_{p=2}^P$.
Assume that for each $p$ the disorder satisfies Definition \ref{def:RV} with index $\alpha_p\le 2p$ or finite $2p$-th moment condition, and that all layers are independent.

Fix $\beta\in(0,\infty)$. Let $\xi(t)=\sum_{c_p<\infty} \gamma_p^2 t^p$ and, for $q\in[0,1)$, set $\eta_q(t)=\xi(q+(1-q)t)-\xi(q)$.
\\We have the following free energy variational formula:
\[
F_{N,\beta}(\bar H_N)
=
\sup_{q\in[0,1)}\left\{
\frac12\log(1-q)
+
\mathcal P\big(\eta_q;\beta\big)
+
E_N^{\rm NIM}(q)
\right\}
+o_{\bbP}(1),
\]
where 
\[
E_N^{\rm NIM}(q)=\max\left\{\max_{c_{p_{\star}}<\infty }\beta |\gamma_{p_\star}|c_{N,p_{\star}}\,\Lambda_{N,p_\star}\left(\frac{q}{p_\star}\right)^{p_\star/2},\max_{c_{p_{\star}=\infty }}\beta |\gamma_{p_\star}|\,\Lambda_{N,p_\star}\left(\frac{q}{p_\star}\right)^{p_\star/2} \right\}
\]
with the Fr\'echet limits $\Lambda_{N,p_\star}$.
\end{thm}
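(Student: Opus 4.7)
The plan is to extend the single-step TAP decomposition of Theorem~\ref{thm:pure-main}(iii) to the mixed setting by combining the NIM reduction of \cite{kim2025heavy} (applied layer by layer to the heavy-tailed $p$-interactions) with the Sawhney--Sellke Gaussian universality \cite{sawhney2024free} (applied on an orthogonal slice of the sphere). I would carry out the argument in three steps.

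First, I would split $\bar H_N = \bar H_N^{\rm spike} + \bar H_N^{\rm bulk}$, where the spike part collects, for each layer $p$ with $\alpha_p\le 2p$, the contribution of the leading extreme coupling identified by NIM, and the bulk part collects the remaining monomials of those layers together with all finite-$2p$-moment layers. By Lemma~\ref{lem:no-int} and the cross-layer NIM arguments of \cite{kim2025heavy}, the spike supports across layers may be taken mutually disjoint. On the bulk side, the finite-moment layers are Gaussian-universal with covariance $\xi(t)=\sum_{c_p<\infty}\gamma_p^2 t^p$ by \cite{sawhney2024free}, while the non-extremal remainder of each heavy-tailed layer is $o_\bbP(1)$ per site under the $\bar H_N$ normalization.

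Second, for each candidate dominant spike from layer $p_\star$ and each overlap $q\in[0,1)$, I would parameterize $\sigma=\sqrt q\,\sigma^\star_{p_\star}+\sqrt{1-q}\,\tau$, where $\sigma^\star_{p_\star}\in S_N$ is the sphere-maximizer of the chosen spike monomial (a signed scaled indicator on its index support) and $\tau$ lies on the $(N-1)$-dimensional sphere of radius $\sqrt{(1-q)N}$ orthogonal to $\sigma^\star_{p_\star}$. The slicing Jacobian produces the entropy term $\tfrac12\log(1-q)$, and the chosen spike evaluates to exactly $\beta|\gamma_{p_\star}|c_{N,p_\star}\Lambda_{N,p_\star}(q/p_\star)^{p_\star/2}$ (with the $c_{p_\star}=\infty$ analogue absorbing the normalization into $\Lambda_{N,p_\star}$). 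Expanding $\langle\sigma,\sigma'\rangle=Nq+(1-q)\langle\tau,\tau'\rangle$ and using the $p$-spin polynomial structure, the bulk Hamiltonian restricted to the slice becomes, up to $o_\bbP(1)$, a mixed Gaussian spherical spin glass on the slice with covariance $\eta_q(t)=\xi(q+(1-q)t)-\xi(q)$; applying \cite{sawhney2024free} on the slice then yields the slice free-energy contribution $\mathcal P(\eta_q;\beta)$.

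Third, I would assemble matching upper and lower bounds. The lower bound comes from restricting the partition function to a thin shell of overlap $q$ with $\sigma^\star_{p_\star}$ at the optimal $(q,p_\star)$ and applying step two. The upper bound comes from partitioning $S_N$ according to overlap with each of the $O(1)$ candidate spike configurations (finite by NIM) and applying the same slice estimate shell by shell; the logarithm of a bounded sum of shell contributions becomes the stated $\max_{p_\star}$ inside $E_N^{\rm NIM}(q)$, and $\sup_q$ yields the outer optimization. The main obstacle I expect is justifying the \emph{max} (rather than \emph{sum}) structure of $E_N^{\rm NIM}(q)$: after selecting one spike as the TAP direction, the unchosen spikes, supported on coordinates disjoint from $\sigma^\star_{p_\star}$ but lying inside the orthogonal slice, could a priori contribute additively to the slice free energy. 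I would handle this via an iterative NIM-type peeling argument exploiting the scale separation between the chosen spike's full activation at overlap $q$ with $\sigma^\star_{p_\star}$ and the unchosen spikes' residual activation on the smaller slice, which is suppressed by both the slice radius $\sqrt{1-q}$ and the slice entropy cost, so that at the $\sup_q$ only the single dominant spike survives. Making this uniform in $q$ across the critical layers ($\alpha_p=2p$ with $c_p<\infty$), where spike and Gaussian bulk are of comparable order, will be the most delicate technical point.
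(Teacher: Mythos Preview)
Your overall strategy---spike/bulk decomposition followed by slice universality---matches the paper's, but your disintegration step differs in a way that manufactures the very obstacle you flag at the end.

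The paper does \emph{not} parametrize $\sigma$ by its overlap with a single chosen spike direction $\sigma^\star_{p_\star}$. Instead it freezes the \emph{entire} spike coordinate block $I_\star=\{1,\dots,d\}$ (the union of the supports of all spike monomials across all heavy-tailed layers, of size $d=O(N^{2\varepsilon_0})$ by Lemma~\ref{lem:no-int}), writes $x=(\sigma_1,\dots,\sigma_d)/\sqrt N$ with $q=\|x\|^2$, and disintegrates the partition function over $x\in\R^d$. On each slice $\{(\sigma_1,\dots,\sigma_d)=x\sqrt N\}$ the spike Hamiltonian $H_N^A$ is a deterministic function of $x$, and the residual sphere of radius $\sqrt{(1-q)N}$ carries \emph{no spike terms whatsoever}. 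The max structure of $E_N^{\rm NIM}(q)$ then drops out of a one-line AM--GM/convexity lemma (Lemma~\ref{lem:amgm}): for a sum of monomials with disjoint index supports, $\max_{\|x\|^2=q}\sum_i|a_i|\,|x_{i1}\cdots x_{ij_i}|=\max_i|a_i|(q/j_i)^{j_i/2}$, because $q\mapsto q^{p/2}$ is convex and hence splitting mass among several monomials is never optimal. No peeling is needed, and the bound is automatically uniform in $q$.

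Your single-direction overlap parametrization leaves the unchosen spikes inside the orthogonal slice and forces you into the iterative peeling you describe. That route is strictly harder, and your heuristic that the unchosen spikes are ``suppressed by both the slice radius $\sqrt{1-q}$ and the slice entropy cost'' is not obviously correct at criticality, where spike and bulk scales coincide---precisely the uniformity issue you yourself identify as delicate. Switching to the full-block disintegration removes the difficulty entirely.

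One smaller point: you assert that ``the non-extremal remainder of each heavy-tailed layer is $o_\bbP(1)$''. This holds only for layers with $c_p=\infty$ (Lemma~\ref{lem:bulk-negligible}). For critical layers $\alpha_p=2p$ with $c_p<\infty$, the truncated bulk has order-one variance and must be retained in the Gaussian-universal piece; this is exactly why those layers appear in $\xi$. The paper handles this via a three-way split $H_N=H_N^A+H_N^B+H_N^C$, with $H_N^B$ (bulk of $c_p=\infty$ layers) shown negligible and $H_N^C$ (bulk of $c_p<\infty$ layers together with the finite-moment layers) carried through Proposition~\ref{prop:adapt-sawhney} on each slice.
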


\begin{rem}
These theorems shows the “universality dichotomy” in its sharpest form for mixed model. After the normalization \eqref{eq:HNbar}, either the Gaussian bulk wins (Parisi), or a single extremal monomial wins (NIM), or variational formula including threshold regime.
The theorem above explains why at the critical exponent the limiting object is instead a random variational problem: the bulk on slices depends on the squared mass $q$ assigned to spike coordinates through $\eta_q$. Since Theorem \ref{thm:mixed-main} implies Theorem \ref{thm:pure-main}, we directly prove Theorem \ref{thm:mixed-main} in \Cref{sec:subcritical}. 
\end{rem}

\section{Background: Gaussian Parisi theory and universality}\label{sec:gaussian}

\subsection{Parisi/Crisanti--Sommers formula}

We recall one convenient form of the Crisanti--Sommers functional.
Let $\xi:[0,1]\to\R$ be convex with $\xi(0)=0$ (in our setting a polynomial).
Let $\mathcal M$ be the set of right-continuous, nondecreasing functions $x:[0,1]\to[0,1]$ such that $x(\hat q)=1$ for some $\hat q<1$.
For $x\in\mathcal M$ define $\hat x(q):=\int_q^1 x(r)\,\dd r$ and
\begin{equation}\label{eq:CS}
\mathcal P_{\mathrm{CS}}(x;\beta^2\xi)
:=\frac12\Big[
\xi'(0)\hat x(0)+\int_0^1 \xi''(q)\hat x(q)\,\dd q
+\int_0^{\hat q}\frac{\dd q}{\hat x(q)}+\log(1-\hat q)
\Big],
\end{equation}
where $\hat q=\inf\{q:x(q)=1\}$.
Then the limiting Gaussian free energy satisfies
\[
\lim_{N\to\infty}F_{N,\beta}^{\mathrm{Gauss}}=\inf_{x\in\mathcal M}\frac{1}{\beta}\mathcal P_{\mathrm{CS}}(x;\beta^2\xi)
=: \mathcal P(\xi;\beta),
\]
with convergence in $L^1$ and almost surely; see \cite{crisanti1992spherical,talagrand2006spherical}.

\subsection{Universality for light tails}

Gaussian universality conditions for free energy were proved in \cite{sawhney2024free}: (i) a quantitative universality statement under uniform $(2p+\varepsilon)$ moment bounds, and (ii) a borderline i.i.d.\ universality theorem under finite $2p$ moments. For the heavy-tailed regime, we require an adaptation of the universality result to handle their bulk behavior.
The next statement is a mild adaptation of the truncation argument in \cite[Thm.~1.4 and \S6]{sawhney2024free}.

\begin{prop}[Adaptation of Gaussian universality]\label{prop:adapt-sawhney}
Fix $P\in\bbN$ and $\varepsilon\in(0,1/2)$.
Let $\vec J=(J^{(p)}_{i_1,\ldots,i_p})_{2\le p\le P,\, 1\leq i_1, \cdots, i_p\leq N}$ be independent, centered couplings and permutation invariant with $\Var(J^{(p)}_{i_1,\ldots,i_p})=\frac{p!}{\{i_1,\cdots,i_p\}!}$.
Assume that for each $2\le p\le P$ and every $1\leq i_1, \cdots, i_p\leq N$ we have, for some constants $c,C>0$ independent of $N$,
\begin{align}\label{eq:adapt-assump}
|J^{(p)}_{i_1,i_2,\cdots,i_p}| &\le cN^{\frac12-\varepsilon}\qquad\text{a.s.},\\
\bbE\bigl[|J^{(p)}_{i_1,i_2,\cdots,i_p}|^{2p}\bigr] &\le CN^{\varepsilon}.\nonumber
\end{align}
Let $\vec G=(G^{(p)}_{i_1,\cdots, i_p})_{p,i_1,\cdots,i_p}$ be independent Gaussian couplings with above variance condition, independent of $\vec J$, and define the corresponding mixed Hamiltonians
\[
H_N^{\vec J}(\sigma):=\sum_{p=2}^P \gamma_p\,\frac1{N^{(p-1)/2}}\sum_{1\leq i_1, \cdots, i_p\leq N} J^{(p)}_{i_1,\ldots,i_p}\,\sigma_{i_1}\cdots\sigma_{i_p},
\]
\[
H_N^{\vec G}(\sigma):=\sum_{p=2}^P \gamma_p\,\frac1{N^{(p-1)/2}}\sum_{1\leq i_1, \cdots, i_p\leq N} G^{(p)}_{i_1,\ldots,i_p}\,\sigma_{i_1}\cdots \sigma_{i_p},
\]
with the (fixed) coefficients $(\gamma_p)_{p\le P}$ from \Cref{sec:model}.
Then there exists $c_0=c_0(P,\varepsilon,C)>0$ such that for all sufficiently large $N$,
\begin{equation}\label{eq:adapt-univ}
\bbE\Bigl[\bigl|F_{N,\beta}(H_N^{\vec J})-\bbE F_{N,\beta}(H_N^{\vec G})\bigr|\Bigr]\le N^{-c_0},
\end{equation}
for fixed $\beta\in[0,\infty)$.
Consequently,
\[
F_{N,\beta}(H_N^{\vec J})\longrightarrow \mathcal P(\xi;\beta)
\quad\text{in $L^1$ and in probability,}
\]
where $\xi(t)=\sum_{p=2}^P\gamma_p^2 t^p$ and $\mathcal P(\xi;\beta)$ is the spherical Parisi/Crisanti--Sommers value.
\end{prop}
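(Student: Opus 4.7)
The plan is to follow the Lindeberg-type exchange strategy of \cite[Thm.~1.4 and \S6]{sawhney2024free} essentially verbatim, after checking that the mild generalization to the permutation-symmetric variance profile $\Var(J^{(p)}_{i_1,\ldots,i_p})=p!/\{i_1,\ldots,i_p\}!$ does not affect their estimates. Fix an enumeration of the independent couplings (say lexicographic in $(p,i_1,\ldots,i_p)$ with $i_1\le\cdots\le i_p$), and interpolate between $\vec J$ and $\vec G$ by replacing one coordinate at a time. Since first and second moments match by construction, a Taylor expansion of $F_{N,\beta}$ at each swap has its order-$1$ and order-$2$ contributions vanishing in expectation, and only the remainders of order $\ge 3$ contribute.

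Next, I would collect the a priori derivative bounds for $f(\vec J):=F_{N,\beta}(H_N^{\vec J})$. Writing $\#\pi(i_1,\ldots,i_p)=p!/\{i_1,\ldots,i_p\}!$ for the number of distinct orderings, one computes
\[
\frac{\partial \log Z_{N,\beta}}{\partial J^{(p)}_{i_1,\ldots,i_p}}
=\frac{\beta\gamma_p\cdot\#\pi(i_1,\ldots,i_p)}{N^{(p-1)/2}}\bigl\langle \sigma_{i_1}\cdots\sigma_{i_p}\bigr\rangle_{N,\beta},
\]
with higher-order partials expressed as Gibbs cumulants of that monomial scaled by $(\beta\gamma_p/N^{(p-1)/2})^k/N$. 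The sphere bound $|\sigma_i|\le\sqrt N$ yields polynomial-in-$N$ control of the $k$-th derivative of $F_{N,\beta}=N^{-1}\log Z_{N,\beta}$, but the bounds deteriorate in $k$, forcing a careful choice of Taylor truncation level.

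The main obstacle is the combinatorial balancing that makes $2p$ the sharp moment threshold: the per-swap error must sum to an $N^{-c_0}$ correction over $\Theta(N^p)$ couplings at each layer $p\le P$. Here the moment and truncation hypotheses \eqref{eq:adapt-assump} combine via the interpolation
\[
\bbE|J^{(p)}|^{k}\le \bbE|J^{(p)}|^{2p}\cdot (cN^{1/2-\varepsilon})^{k-2p}\le CN^{\varepsilon}\cdot (cN^{1/2-\varepsilon})^{k-2p},\qquad k\ge 2p,
\]
so that at an appropriate Taylor order $k\asymp 2p$ the error per swap is summable. The bookkeeping in \cite[\S6.2--6.3]{sawhney2024free} transfers directly: the multinomial weight $p!/\{i_1,\ldots,i_p\}!$ only reweights repeated-index tuples, which form a lower-dimensional sublattice of size $O(N^{p-1})$ and are absorbed into the slack $N^\varepsilon$.

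Finally, to upgrade \eqref{eq:adapt-univ} to convergence of $F_{N,\beta}(H_N^{\vec J})$ to $\mathcal P(\xi;\beta)$, I would invoke Gaussian concentration of $F_{N,\beta}(H_N^{\vec G})$ around its mean (Gaussian log-Sobolev or Borell--TIS on the coupling vector, whose Lipschitz constant is $O(N^{-1/2})$), together with the Crisanti--Sommers--Talagrand identification $\bbE F_{N,\beta}(H_N^{\vec G})\to\mathcal P(\xi;\beta)$ from \cite{talagrand2006spherical}. Combining these with \eqref{eq:adapt-univ} yields both $L^1$ and in-probability convergence of $F_{N,\beta}(H_N^{\vec J})$ to the Gaussian Parisi value.
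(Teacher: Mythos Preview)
Both your sketch and the paper's proof defer to \cite{sawhney2024free}, and both are valid, but they highlight different layers of that argument. The paper follows the truncation decomposition of \cite[\S6]{sawhney2024free} explicitly: it sets $\hat\delta=N^{-\varepsilon/2}$, splits $J=J^{\mathrm{small}}+J^A+J^B+J^C$ by magnitude, observes that the a.s.\ bound $|J|\le cN^{1/2-\varepsilon}$ kills $J^B$ and $J^C$ outright, and then pinpoints the \emph{single} place where the $2p$-th moment hypothesis actually bites---the probability estimate $\bbP(E_N(\eta/2))\ll_P\sup\bbE|J|^{2p}\cdot N^{-1/2}$ from \cite[Lemmas~2.5--2.6]{sawhney2024free}, which under $\bbE|J|^{2p}\le CN^{\varepsilon}$ becomes $O(N^{-1/2+\varepsilon})$ and stays summable for $\varepsilon<1/2$. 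Your route instead frames the whole thing as a one-shot Lindeberg swap with Taylor remainder control and the moment interpolation $\bbE|J|^k\le \bbE|J|^{2p}\cdot(cN^{1/2-\varepsilon})^{k-2p}$. That interpolation is correct and useful, but your derivative bounds via $|\sigma_i|\le\sqrt N$ are, as you acknowledge, far too crude on their own (they produce a per-swap error that \emph{grows} polynomially in $N$); the real work---controlling the Gibbs-cumulant factors more sharply---is precisely what the Sawhney--Sellke lemmas provide, and is hidden in your deferral to ``the bookkeeping in \S6.2--6.3''. So the approaches coincide at the level of where the proof lives; the paper's sketch is simply more surgical about which inequality absorbs the extra $N^{\varepsilon}$, while yours gives a cleaner high-level narrative of why $2p$ is the relevant moment order.
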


\begin{proof}[Proof sketch]
We follow the proof of~\cite[Theorem~1.4]{sawhney2024free} and indicate only the modifications. 
Set $\hat\delta = N^{-\varepsilon/2}$ and decompose the disorder as in~\cite[Section~6]{sawhney2024free}, writing $J=J^{\mathrm{small}}+J^A+J^B+J^C$ according to the truncation levels $|J|\le \hat\delta$, $\hat\delta<|J|\le N^{1/2-\varepsilon}$, and larger values.%

Under the boundedness assumption $|J_{i_1,\dots,i_p}|\le cN^{1/2-\varepsilon}$, the ``large'' parts $J^B$ and $J^C$ vanish identically, so only $J^{\mathrm{small}}$ and $J^A$ remain. 

The contribution of $J^A$ is controlled via estimates analogous to Lemmas~2.5 and~2.6 in~\cite{sawhney2024free}. 
The only place where the exact size of the $2p$-th moments enters is in the bound
\[
  \bbP(E_N(\eta/2))
  \ll_P
  \sup_{i_1,\dots,i_p}\bbE|J_{i_1,\dots,i_p}|^{2p}\,N^{-1/2},
\]
see~\cite[p.~7]{sawhney2024free}. 
Our assumption $\sup\bbE|J_{i_1,\dots,i_p}|^{2p}\le C N^{\varepsilon}$ implies that this probability is at most $C N^{-1/2+\varepsilon}$, which is summable in $N$ provided $\varepsilon<1/2$. 
The rest of the argument in Lemmas~2.5 and~2.6 then goes through unchanged, and Lemma~4.4 of~\cite{sawhney2024free} is distribution-free.

Collecting these estimates, the polynomially high-probability bounds in~\cite[Section~6.3]{sawhney2024free} remain valid under our assumptions, with slightly modified exponents depending on $(P,\varepsilon,C)$. 
This yields the claimed $N^{-c_0}$ bound on the expectation difference.
\end{proof}

\subsection{NIM Model and the monomial functional}\label{sec:nim}

This section records the elementary computation behind Definition \ref{def:fp} and connects it to the closed-form expression in \cite{kim2025heavy}.

\begin{lem}[Closed form for $f_p$]\label{lem:fp-closed}
Fix $p\ge2$.
There exists a threshold $h_p^\star>0$ such that $f_p(h)=0$ for $0\le h\le h_p^\star$ and $f_p(h)>0$ for $h>h_p^\star$.
For $h>h_p^\star$, the maximizer $q^\star(h)\in(0,1)$ in \eqref{eq:fp-def} satisfies the stationarity condition
\begin{equation}\label{eq:stationarity}
\frac{1}{1-q}=\frac{h}{p^{p/2}}\cdot p\,q^{\frac p2-1}.
\end{equation}
Equivalently, with $\lambda:=\frac{q}{2p(1-q)}$, $\lambda$ solves
\[
2\log h + (p-2)\log(2\lambda) - p\log(1+2p\lambda)=0,
\]
and at this $\lambda$ one has
\[
f_p(h)=2\lambda-\frac12\log(1+2p\lambda).
\]
\end{lem}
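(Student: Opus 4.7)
The plan is to analyze the objective $F(q;h):=\tfrac12\log(1-q)+h(q/p)^{p/2}$ on $[0,1)$ by elementary calculus. Since $F(0;h)=0$, one has $f_p(h)\ge 0$; since $F(q;h)\to-\infty$ as $q\to1^-$, the supremum is attained, either at $q=0$ or at some interior critical point. The inequality $F(q;h)>0$ at $q\in(0,1)$ is equivalent to $h>g(q):=\tfrac{p^{p/2}}{2q^{p/2}}\log\tfrac{1}{1-q}$, so that $f_p(h)>0$ iff $h>h_p^\star:=\inf_{q\in(0,1)}g(q)$. A short analysis shows $h_p^\star>0$: for $p>2$, the factor $q^{-p/2}$ forces $g(q)\to+\infty$ as $q\to0^+$ while $\log\tfrac{1}{1-q}\to\infty$ at the upper endpoint; for $p=2$ the Taylor expansion $g(q)=\sum_{k\ge1}q^{k-1}/k\ge 1$, combined with the blow-up at $q\to1^-$, gives $h_2^\star=1$. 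In all cases $g$ is continuous and bounded away from $0$ on $(0,1)$, yielding the threshold and the dichotomy $f_p(h)=0$ on $[0,h_p^\star]$ versus $f_p(h)>0$ beyond.

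For $h>h_p^\star$, differentiating gives $\partial_q F(q;h)=-\tfrac{1}{2(1-q)}+\tfrac{h}{2 p^{p/2-1}}\,q^{p/2-1}$, and setting this to zero is exactly \eqref{eq:stationarity}. Rewriting the FOC as $\phi(q):=(1-q)^{-1}q^{1-p/2}=hp^{1-p/2}$, the function $\phi$ is $\cup$-shaped on $(0,1)$ for $p>2$ (blowing up at both endpoints) and strictly increasing for $p=2$, so $F'=0$ has at most two roots; for $h>h_p^\star$ the larger root $q^\star(h)\in(0,1)$ is the unique local and hence global maximizer on $(0,1)$. To obtain the closed form, I substitute $\lambda:=q/(2p(1-q))$, so that $2p\lambda=q/(1-q)$, $1+2p\lambda=(1-q)^{-1}$, and $q=2p\lambda/(1+2p\lambda)$. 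Plugging this into \eqref{eq:stationarity} and taking logarithms, the $\log p$ constants cancel and one obtains $2\log h+(p-2)\log(2\lambda)-p\log(1+2p\lambda)=0$. Finally, multiplying the FOC by $q/p$ yields the identity $h(q/p)^{p/2}=q/(p(1-q))=2\lambda$, so that
\[
f_p(h)=\tfrac12\log(1-q^\star)+h(q^\star/p)^{p/2}=-\tfrac12\log(1+2p\lambda)+2\lambda,
\]
as claimed.

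The main subtle point, rather than a genuine obstacle, is the non-concavity of $F(\,\cdot\,;h)$ for $p>2$: there the FOC has two roots above threshold, and one must verify that the larger one is the global maximizer and not a spurious local minimum. This follows from the sign pattern of $\partial_q F$ (negative near $0$, positive between the two roots, negative near $1$), together with the fact that the two roots coalesce exactly at $h=h_p^\star$. The change of variables $\lambda=q/(2p(1-q))$ is the scale-invariant parameterization that matches the convention used in \cite{kim2025heavy} and cleanly links the stationarity equation to the TAP-type representation invoked elsewhere in the paper.
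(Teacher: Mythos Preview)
Your proof is correct and follows the same elementary one-variable calculus route that the paper only sketches (``differentiate and change variables''), supplying the missing details on the threshold and on which critical point is the maximizer. One minor inaccuracy: for $p>2$ the two roots of the first-order condition coalesce at a value strictly smaller than $h_p^\star$ (in the intermediate range there are already two critical points, but the interior local maximum still has negative value); since your threshold argument via $g(q)$ already forces the supremum to be attained in the interior for $h>h_p^\star$, this slip does not affect the proof.
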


\begin{proof}
The proof is a one-dimensional calculus exercise.
Differentiate the objective in \eqref{eq:fp-def} to obtain \eqref{eq:stationarity}.
The change of variables $\lambda=\frac{q}{2p(1-q)}$ shows the corresponding equalities.
\end{proof}

\begin{rem}
For $p=2$, 
solving \eqref{eq:stationarity} gives the explicit BBP-type threshold $h_2^\star=1$ and
\[
f_2(h)=\frac{h-1}{2}-\frac12\log h,\qquad h\ge1,
\]
while $f_2(h)=0$ for $h\le1$.
\end{rem}

\section{Proof of the main theorems}\label{sec:subcritical}

Throughout this section, we consider the mixed normalized Hamiltonian $\bar H_{N}$ with interaction order $2\le p\le P$ from \eqref{eq:HNbar}.
As Definition \ref{def:RV}, every disorder for each $p$ is a regularly varying function with index $\alpha_p\le 2p$ or finite $2p$-th moment. 
We make explicit a spike/bulk decomposition and prove how their relationship changes as $\alpha$ varies at the level of free energy and ground state energy.

\subsection{Spike/bulk split and bulk moment bounds}

Fix a small exponent $\varepsilon_0>0$ and set the deterministic spike threshold for $p$-layer with $\alpha_p$ regularly varying function.
\begin{equation}\label{eq:subcritical-threshold}
u_{N,p}:=b_{N,p}\,N^{-\varepsilon_0/\alpha_p}.
\end{equation}
Through this section, we express the indicies as the tuple
\[
    I=(i_1,\cdots,i_p).
\]
Define the spike set and the corresponding split of the $p$-layer tensor for $\alpha_p\le 2p$ by
\[
\mathcal S_{N,p}:=\Bigl\{I=(i_1,\cdots,i_p):\ |H^{(p)}_I|>u_{N,p}\Bigr\},
\qquad
H^{(p)}_I=H^{(p),A}_I+H^{(p),B}_I,
\]
where $H^{(p),A}_I:=H^{(p)}_I\1_{\{I\in\mathcal S_{N,p}\}}$ and $H^{(p),B}_I:=H^{(p)}_I\1_{\{I\notin\mathcal S_{N,p}\}}$.
Accordingly, write
\begin{equation}\label{eq:AB-split}
\bar H_{N,p}(\sigma)=H^A_{N,p}(\sigma)+H^B_{N,p}(\sigma),
\qquad
H^B_{N,p}(\sigma)
=\frac1{N^{(p-1)/2}}\sum_{1\leq i_1,\cdots\, i_p\leq N}\widehat J_{I,p}\,\sigma_I,
\end{equation}
with bulk couplings
\begin{equation}\label{eq:bulk-couplings}
\widehat J_{I,p}:=\frac{N^{1/2}}{b_{N,p}}\,H^{(p),B}_I.
\end{equation}
For the random variables inside $H_I^{(p),B}$, we can consider these random variables follow $H_p\1_{H_p< u_{N,p}}$.
For $p$-layer with finite $2p$-th moment condition, we define
\[
    \bar{H}_{N,p}=H_{N,p}^B,\quad H_{N,p}^A= 0.
\]
The Hamiltonian for whole mixed model is defined as
\[
    H_N(\sigma)= \sum _{2\le p \le P}\gamma_p\bar{H}_{N,p}(\sigma).
\]
We divide its Hamiltonian as above:
\[
    H_N^A=\sum_{2\le p\le P} \gamma_p H_{N,p}^A, \qquad H_N^B=\sum_{\alpha_p\leq 2p\ {\rm and}  \ c_p=\infty} \gamma_p H_{N,p}^B, \qquad H_N^C=H_N-H_N^A-H_N^B.
\]
Moreover, we define the support of spin variables of $H_N^A$ as
\[
    I_{\star}=\{1,2,\cdots,d\}.
\]
Due to its symmetry, we can easily assume that the support of $H_N^A$ as $\{1,\cdots,d\}$. To apply Proposition \ref{prop:adapt-sawhney}, we require information about its upper bound, $2p$-th moment, and variance. The following lemma gives such information.
\begin{lem}[Bulk moment bounds]\label{lem:bulk-moment-bounds}
For $p$-layers with regaularly varying with tail expoenet $\alpha_p\leq 2p$, and let $\widehat J_{I,p}$ be as in \eqref{eq:bulk-couplings} with threshold \eqref{eq:subcritical-threshold}.
Then there exist constants $\varepsilon_1,\varepsilon_2,\delta>0$ (depending only on $(p,\alpha_p,\varepsilon_0)$) and $C<\infty$
such that for all large $N$,
\begin{align}
\label{eq:bulk-bound}
\max_{1\leq i_1,\cdots\, i_p\leq N}|\widehat J_{I,p}| &\le N^{1/2-\varepsilon_1}\qquad\text{a.s.},\\
\label{eq:bulk-2p}
\bbE|\widehat J_{I,p}|^{2p} &\le CN^{1/2-\delta},\\
\label{eq:bulk-2}
\bbE|\widehat J_{I,p}|^{2} &\le \max\{Cc_{N,p}^{-2}, N^{-\varepsilon_2}\}.
\end{align}
\end{lem}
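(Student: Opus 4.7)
The plan is to reduce each of the three bounds to an elementary computation combining (a) the deterministic truncation level $u_{N,p}=b_{N,p}N^{-\varepsilon_0/\alpha_p}$ with (b) Karamata's theorem for truncated moments of regularly varying variables, together with the defining relation $M_{N,p}\bbP(|H^{(p)}|>d_{N,p})\asymp 1$ and the dichotomy $b_{N,p}=d_{N,p}$ (if $c_p=\infty$) versus $b_{N,p}=N^{1/2}$ (if $c_p<\infty$). All three bounds are then uniform in $I$, since the repeated-index weight $p!/\{i_1,\dots,i_p\}!$ from Definition \ref{def:HNbar} is bounded by $p!$ and contributes only a constant prefactor.

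Part (i) is essentially deterministic: since $|H^{(p),B}_I|\le u_{N,p}$ by construction, the factor $N^{1/2}/b_{N,p}$ in \eqref{eq:bulk-couplings} gives $|\widehat J_{I,p}|\le N^{1/2-\varepsilon_0/\alpha_p}$ almost surely, and I would take $\varepsilon_1=\varepsilon_0/\alpha_p$. For part (ii) I would write
\[
\bbE|\widehat J_{I,p}|^{2p}=\Bigl(\frac{N^{1/2}}{b_{N,p}}\Bigr)^{2p}\,\bbE\bigl[|H^{(p)}|^{2p}\1_{\{|H^{(p)}|\le u_{N,p}\}}\bigr]
\]
and invoke Karamata. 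In the strictly subcritical regime $\alpha_p<2p$, the truncated moment is comparable to $u_{N,p}^{2p}\bbP(|H^{(p)}|>u_{N,p})$ up to a constant, and substituting the definitions collapses the expression to $N^{\varepsilon_0(1-2p/\alpha_p)+o(1)}$, which is in fact polynomially small since $\alpha_p<2p$. At the critical exponent $\alpha_p=2p$, integration by parts only yields a slowly varying factor $\widetilde L(u_{N,p})$, and I would bound the prefactor $(N^{1/2}/b_{N,p})^{2p}$ by either $c_{N,p}^{-2p}$ or $1$ and absorb $\widetilde L$ into $N^{\varepsilon}$ for $\varepsilon$ arbitrarily small.

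For part (iii) I would split on whether $H^{(p)}$ is already square-integrable. If $\alpha_p>2$, then $\bbE|H^{(p)}|^2<\infty$ and the truncated variance is bounded by a constant, giving directly
\[
\bbE|\widehat J_{I,p}|^2\le \frac{N}{b_{N,p}^2}\,\bbE|H^{(p)}|^2\le Cc_{N,p}^{-2},
\]
which is the first branch of the max. If $\alpha_p\le 2$ (which forces $\alpha_p<2p$, hence $c_p=\infty$ and $b_{N,p}=d_{N,p}$), Karamata gives a truncated variance comparable to $u_{N,p}^2\bbP(|H^{(p)}|>u_{N,p})$, and the same substitution produces an exponent of the form $N^{1-p+\varepsilon_0(1-2/\alpha_p)}$, which is strictly negative because $p\ge 2$ and $\alpha_p\le 2$; this delivers the $N^{-\varepsilon_2}$ branch.

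The main obstacle I anticipate is the bookkeeping at the boundary exponents $\alpha_p=2p$ (for (ii)) and $\alpha_p=2$ (for (iii)), where Karamata only provides a slowly varying multiplicative correction rather than a clean power. The key observation to circumvent this is that the truncation level $u_{N,p}$ already carries the polynomial factor $N^{-\varepsilon_0/\alpha_p}$, so any slow-variation term in $u_{N,p}$ can be dominated by $N^\varepsilon$ for $\varepsilon$ arbitrarily small and absorbed into the built-in polynomial gain; choosing $\varepsilon_0$ small then fixes concrete values of $\varepsilon_1,\varepsilon_2,\delta>0$ in terms of $(p,\alpha_p)$.
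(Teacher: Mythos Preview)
Your proposal is correct and follows essentially the same route as the paper: the deterministic truncation for \eqref{eq:bulk-bound} with $\varepsilon_1=\varepsilon_0/\alpha_p$, Karamata's theorem applied to the truncated $2p$-th and second moments together with the quantile relation $b_{N,p}^{-\alpha_p}L(b_{N,p})\lesssim M_{N,p}^{-1}$, and the same case split on whether $\alpha_p>2$ (finite variance, giving the $c_{N,p}^{-2}$ branch) or $\alpha_p\le 2$ (giving the $N^{-\varepsilon_2}$ branch). Your identification of the boundary exponents $\alpha_p=2p$ and $\alpha_p=2$ as the only delicate points---handled by absorbing the resulting slowly varying factor into an arbitrarily small power of $N$---is exactly how the paper proceeds as well.
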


\begin{proof}
The bound \eqref{eq:bulk-bound} is immediate from \eqref{eq:subcritical-threshold}--\eqref{eq:bulk-couplings}:
on the bulk event $|H^{(p)}_{i_1,i_2,\cdots,i_p}|\le u_{N,p}=b_{N,p}N^{-\varepsilon_0/\alpha_p}$ we have
$|\widehat J_{I,p}|\le N^{1/2}N^{-\varepsilon_0/\alpha}=N^{1/2-\varepsilon_1}$ with $\varepsilon_1:=\varepsilon_0/\alpha$.

For the moment bounds we use the tail-integral identity, valid for any $q>0$ and $u>0$,
\[
\bbE\bigl[|H^{(p)}|^{q}\1_{\{|H^{(p)}|\le u\}}\bigr]
=
q\int_0^{u} t^{q-1}\,\bbP\bigl(|H^{(p)}|>t\bigr)\,\dd t.
\]
Under regular variation with index $\alpha<q$, Karamata's theorem gives (see e.g.\ \cite{resnick1987extreme,dehaan2006extreme})
$\bbE[|H^{(p)}|^{q}\1_{\{|H^{(p)}|\le u\}}]\asymp u^{q-\alpha}L(u)$ as $u\to\infty$
(with constants depending only on $(q,\alpha)$).
Applying this with $q=2p$ and $u=u_{N,p}$ yields
\[
\bbE\bigl[|H^{(p)}|^{2p}\1_{\{|H^{(p)}|\le u_{N,p}\}}\bigr]
=O\bigl(u_{N,p}^{2p-\alpha_p}L(u_{N,p})\bigr).
\]
Since $\widehat J_{I,p}=(N^{1/2}/b_{N,p})H^{(p)}_{i_1,i_2,\cdots,i_p}\1_{\{|H^{(p)}_{i_1,i_2,\cdots,i_p}|\le u_{N,p}\}}$,
we obtain
\[
\bbE|\widehat J_{I,p}|^{2p}
=
\Big(\frac{N^{1/2}}{b_{N,p}}\Big)^{2p}
\bbE\bigl[|H^{(p)}|^{2p}\1_{\{|H^{(p)}|\le u_{N,p}\}}\bigr]
=
O\!\left(
\frac{N^{p}}{b_{N,p}^{2p}}\,
u_{N,p}^{2p-\alpha_p}L(u_{N,p})
\right).
\]
Using $u_{N,p}=b_{N,p}N^{-\varepsilon_0/\alpha}$, the factor simplifies to
\[
\frac{N^{p}}{b_{N,p}^{2p}}\,u_{N,p}^{2p-\alpha}
=
N^{p}\,b_{N,p}^{-\alpha}\,N^{-\varepsilon_0(2p-\alpha)/\alpha}.
\]
By definition of $b_{N,p}$ we have $\bbP(|H^{(p)}|>b_{N,p})\le M_{N,p}^{-1}$.
Using regular variation $\bbP(|H^{(p)}|>t)=t^{-\alpha}L(t)$ gives the inequality
$b_{N,p}^{-\alpha}L(b_{N,p})\le M_{N,p}^{-1}$, hence $N^{p}b_{N,p}^{-\alpha}\le CL(b_{N,p})$ up to a constant.
Since $L$ is slowly varying, for every $\eta>0$ we have $L(b_{N,p})=O(N^{\eta})$ as $N\to\infty$.
Choosing $\eta$ sufficiently small yields \eqref{eq:bulk-2p} for some $\delta>0$.

We can apply same procedure for $q=2$. For $\alpha_p<2$, we obtain
\[
\bbE|\widehat J_{I,p}|^{2}
=
\Big(\frac{N^{1/2}}{b_{N,p}}\Big)^{2}
\bbE\bigl[|H^{(p)}|^{2}\1_{\{|H^{(p)}|\le u_{N,p}\}}\bigr]
=
O\!\left(
\frac{N}{b_{N,p}^{2}}\,
u_{N,p}^{2-\alpha}L(u_{N,p})
\right)=O(N^{-\epsilon_2}).
\]
For $\alpha_p=2$, we have similar equality
\[
\bbE|\widehat J_{I,p}|^{2}
=
\Big(\frac{N^{1/2}}{b_{N,p}}\Big)^{2}
\bbE\bigl[|H^{(p)}|^{2}\1_{\{|H^{(p)}|\le u_{N,p}\}}\bigr]
=
O\!\left(
\frac{N}{b_{N,p}^{2}}\,
N^{\eta}
\right)=O(N^{-\epsilon_2}),
\]
since $\bbE\bigl[|H^{(p)}|^{2}\1_{\{|H^{(p)}|\le u_{N,p}\}}\bigr]=O(N^{\eta})$ holds for every $\eta>0$.
For $\alpha_p>2$, we have finite second moment for $H^{(p)}$ and this implies 
\[
\bbE|\widehat J_{I,p}|^{2}= O(c_{N,p}^{-2}).
\]
\end{proof}

\subsection{Negligibility of the Bulk Contribution $H_N^B$} 

\begin{lem}[Bulk contribution is negligible]\label{lem:bulk-negligible}
Assume Definition \ref{def:RV} with $\alpha_p\leq 2p$ and $c_p=\infty $ if $\alpha_p=2p$. let $H^B_{N,p}$ be the bulk Hamiltonian in \eqref{eq:AB-split}.
Fix $\beta\ge0$.
Then there exist $\varepsilon>0$ and $C<\infty$ such that
\[
F_{N,\beta}(H^B_{N,p})=o_{\bbP}(1)
\qquad\text{and}\qquad
\mathrm{GSE}_N\bigl(H^B_{N,p}\bigr)=o_{\bbP}(1)
\]
with probability tending to $1$ as $N\to\infty$.
\end{lem}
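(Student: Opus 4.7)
My approach is to reduce both assertions to a single ground-state bound and then establish the latter via Bernstein-type tail estimates on an $\varepsilon$-net of $S_N$.

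First, I would reduce to the GSE claim. Since the disorder is symmetric we have $H^B_{N,p}\stackrel{d}{=}-H^B_{N,p}$, and the elementary sandwich $\frac{\beta}{N}\min_\sigma H(\sigma)\le F_{N,\beta}(H)\le\frac{\beta}{N}\max_\sigma H(\sigma)$ implies
\[
|F_{N,\beta}(H^B_{N,p})|\le \beta\,\max\bigl(\mathrm{GSE}_N(H^B_{N,p}),\,\mathrm{GSE}_N(-H^B_{N,p})\bigr).
\]
Since the two ground-state energies are equal in distribution by symmetry of the disorder, the free-energy statement follows from the ground-state statement. The crucial quantitative input is \Cref{lem:bulk-moment-bounds}: the bulk couplings are uniformly bounded by $N^{1/2-\varepsilon_1}$, and their variance $v_N:=\max_I\bbE\widehat J_{I,p}^{2}\le\max\{Cc_{N,p}^{-2},N^{-\varepsilon_2}\}$ tends to zero under the standing hypothesis $\alpha_p<2p$ or $\alpha_p=2p$ with $c_p=\infty$.

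Second, I would establish a pointwise tail bound. For each fixed $\sigma\in S_N$, the random variable $H^B_{N,p}(\sigma)$ is a linear combination of the independent centered variables $\widehat J_{I,p}$; a routine computation using $\norm{\sigma}^2=N$ yields $\Var H^B_{N,p}(\sigma)=O(v_N N)$, while each summand is bounded by $O(N^{1-\varepsilon_1})$ after including the coefficient $\sigma_I/N^{(p-1)/2}$ and the crude uniform bound $|\sigma_I|\le N^{p/2}$. Bernstein's inequality then gives
\[
\bbP\bigl(|H^B_{N,p}(\sigma)|/N>t\bigr)\le 2\exp\!\Bigl(-\tfrac{c\,t^{2}N}{v_N+tN^{-\varepsilon_1}}\Bigr),
\]
which decays like $\exp(-\Omega(N))$ as soon as $t\gg\sqrt{v_N}$. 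A standard $\varepsilon$-net $\mathcal N\subset S_N$ has cardinality $e^{O(N)}$, so a union bound at $t=C\sqrt{v_N}$ with $C$ sufficiently large controls the maximum over $\mathcal N$ by $o(1)$ with probability tending to one. Passing from $\mathcal N$ to the full sphere requires a Lipschitz bound for $H^B_{N,p}/N$, which reduces to an operator-norm estimate for the random symmetric bulk tensor $(\widehat J_{I,p})_I$: a standard net argument on the unit sphere (again applying Bernstein to the associated $p$-linear form) gives an operator norm of order $\sqrt{v_N N}$ up to polylogarithmic factors, hence a Lipschitz constant of $H^B_{N,p}/N$ of order $\sqrt{v_N/N}\cdot\mathrm{polylog}(N)$, so the net-to-sphere extension error is negligible at the scale $\sqrt{v_N}$.

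The main obstacle is this last step: controlling the operator norm of a random symmetric $p$-tensor with truncated but still non-Gaussian entries, uniformly in the range $\alpha_p\in(0,2p]$. A conceptually cleaner but technically delicate alternative would be to apply \Cref{prop:adapt-sawhney} to the rescaled bulk $\widehat J_{I,p}/\sqrt{v_N}$ at effective inverse temperature $\beta\sqrt{v_N}$, yielding $F_{N,\beta}(H^B_{N,p})\to \mathcal P(v_N t^p;\beta)\to 0$ and bypassing tensor-norm computations; one must then verify that the rescaled couplings still satisfy the boundedness and $2p$-th moment hypotheses of that proposition, and handle the GSE by a controlled zero-temperature limit $\beta=\beta_N\to\infty$, which in turn requires uniformity in $\beta$ beyond the statement as given.
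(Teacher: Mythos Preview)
Your primary Bernstein--net argument has a genuine gap in the tail bound itself. You write the summand bound $M=O(N^{1-\varepsilon_1})$ correctly, but then the displayed Bernstein inequality should read
\[
\bbP\bigl(|H^B_{N,p}(\sigma)|/N>t\bigr)\le 2\exp\!\Bigl(-\tfrac{c\,t^{2}N}{v_N+t\,N^{\,1-\varepsilon_1}}\Bigr),
\]
not $tN^{-\varepsilon_1}$ in the denominator (plug $x=tN$, $V=O(v_N N)$, $M=N^{1-\varepsilon_1}$ into $\exp(-cx^2/(V+Mx))$). With the correct exponent, for any fixed $t>0$ the bound is only $\exp(-\Theta(tN^{\varepsilon_1}))$, which is far too weak to beat a net of size $e^{\Theta(N)}$. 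The culprit is the crude worst case $|\sigma_I|\le N^{p/2}$, attained only at highly localized $\sigma$; repairing this would require, for example, a stratification of the net by $\|\sigma\|_\infty$ and separate handling of the near-diagonal terms, which is substantially more work than a ``standard net argument''. The same issue resurfaces in your proposed tensor operator-norm bound, since that bound is again a net computation with the same $M$.

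The paper instead follows your ``alternative'' route but with two devices that resolve exactly the issues you flag. First, rather than rescaling to variance $1$ approximately, it rescales by $\lambda_N\to\infty$ and then \emph{adds} an independent Rademacher field $K_I=\pm J_N$ to force $\bbE(\lambda_N\widehat J_{I,p}+K_I)^2$ to equal the target variance exactly; both $J^{(2)}:=\lambda_N\widehat J+K$ and $J^{(3)}:=K/J_N$ then satisfy the hypotheses of Proposition~\ref{prop:adapt-sawhney} with no leftover variance mismatch. Second, there is no need for uniformity in $\beta$ or a zero-temperature limit: the free energy is handled by two applications of H\"older's inequality (to separate $\lambda_N H^B$ from $K$, and then to descend from $\lambda_N H^B$ to $H^B$), and the ground state is handled by plain homogeneity, $\mathrm{GSE}_N(H^B)=\lambda_N^{-1}\mathrm{GSE}_N(\lambda_N H^B)$, together with $\sup(f+g)\le\sup f+\sup g$. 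These algebraic reductions replace all tensor-norm estimates by the single black box of Proposition~\ref{prop:adapt-sawhney}.
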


\begin{proof}
Write $H^B_{N,p}$ in the standard form \eqref{eq:AB-split} with couplings $\widehat J_{I,p}$(Here we use expression $I=(i_1,\cdots,i_p)$.
By Lemma \ref{lem:bulk-moment-bounds} we have
\[
|\widehat J_{I,p}|\le N^{1/2-\varepsilon_1},
\qquad
\bbE|\widehat J_{I,p}|^{2p}\le C N^{1/2-\delta},
\qquad
\Var(\widehat J_{I,p})\le e_{N,p}^{-2},
\]
for some $\varepsilon_1,\delta,e_{N,p}>0$ with $\lim_{N\to \infty}e_
{N,p}=\infty$.

\emph{Step 1: choose a rescaling.}
Fix $\varepsilon_3\in(0,\varepsilon_1/4)$ and set $\lambda_N=\min\{N^{\varepsilon_3},e_{N,p}^{1/2}\}$.
Consider the rescaled bulk Hamiltonian
\[
\widetilde H^B_{N,p}:=\lambda_N H^B_{N,p}
=\frac1{N^{(p-1)/2}}\sum_{I}\lambda_N\widehat J_{I,p}\,\sigma_I.
\]
Then for large $N$ the rescaled couplings satisfy
$|\lambda_N\widehat J_{I,p}|\le N^{1/2-\varepsilon}$ with $\varepsilon:=\varepsilon_1/2$
and $\bbE|\lambda_N\widehat J_{I,p}|^{2p}\le C N^{\varepsilon}$ for some (possibly new) $\varepsilon>0$.

\emph{Step 2: variance correction.}
Let $(K_I)$ be i.i.d., independent of $\widehat J$, with
$K_I=\pm J_N$ with probability $1/2$, where $J_N\in(0,1]$ is chosen so that
\[
\bbE\bigl[(\lambda_N\widehat J_{I,p}+K_I)^2\bigr]=\frac{p!}{\{i_1,\cdots,i_p\}!}.
\]
Since $\Var(\lambda_N\widehat J_{I,p})\le C\lambda_N^2 e_{N,p}^{-2}=O({e_{N,p}^{-1}})$,
we have $J_N=1+O(e_{N,p}^{-1})$.
Define two centered, variance-one coupling arrays
\[
J^{(2)}_I:=\lambda_N\widehat J_{I,p}+K_I,
\qquad
J^{(3)}_I:=\frac1{J_N}K_I\in\{\pm1\}.
\]
Both arrays satisfy the hypotheses of Proposition \ref{prop:adapt-sawhney} (after adjusting constants).

Let $H_N^{(2)}$ and $H_N^{(3)}$ denote the corresponding pure $p$-spin Hamiltonians with couplings $J^{(2)}$ and $J^{(3)}$
in the standard scaling $N^{-(p-1)/2}$.
By Proposition \ref{prop:adapt-sawhney}, the random variables
$F_{N,\beta}(H_N^{(2)}),F_{N,\beta}(H_N^{(3)})$ and $\mathrm{GSE}_N(H_N^{(2)}),\mathrm{GSE}_N(H_N^{(3)})$ are tight and $O_{\bbP}(1)$.

\emph{Step 3: Hölder bounds for the free energy.}
Denote by $H_N^{(K)}$ the Hamiltonian with couplings $K_I$ (so that $H_N^{(2)}=\widetilde H^B_{N,p}+H_N^{(K)}$).
By Hölder inequality, we have
\[
Z_{N,\beta}\Bigl(\frac{\lambda_N}{2}H^B_{N,p}\Bigr)
=
\int \exp\Bigl(\frac{\beta\lambda_N}{2}H^B_{N,p}(\sigma)\Bigr)\,\dd\mu_N(\sigma)
\le Z_{N,\beta}(H_N^{(2)})^{1/2}\,Z_{N,\beta}(-H_N^{(K)})^{1/2}.
\]
Taking $\frac1N\log$ and using the symmetry of $K_{i_1,i_2,\cdots,i_p}$ gives
\[
F_{N,\beta}\Bigl(\frac{\lambda_N}{2}H^B_{N,p}\Bigr)
\le \frac12 F_{N,\beta}(H_N^{(2)})+\frac12 F_{N,\beta}(H_N^{(K)})
=O_{\bbP}(1).
\]
A second application of Hölder (equivalently, convexity of $\beta\mapsto \log Z_{N,\beta}(H)$) yields, for $\lambda_N\ge2$,
\[
Z_{N,\beta}(H^B_{N,p})
=\int \Bigl(\exp\bigl(\tfrac{\beta\lambda_N}{2}H^B_{N,p}(\sigma)\bigr)\Bigr)^{2/\lambda_N}\,\dd\mu_N(\sigma)
\le Z_{N,\beta}\Bigl(\frac{\lambda_N}{2}H^B_{N,p}\Bigr)^{2/\lambda_N},
\]
hence
\[
F_{N,\beta}(H^B_{N,p})
\le \frac{2}{\lambda_N}\,F_{N,\beta}\Bigl(\frac{\lambda_N}{2}H^B_{N,p}\Bigr)
=O_{\bbP}(\lambda_N^{-1})
=O_{\bbP}(e_{N,p}^{-1/2}).
\]

\emph{Step 4: a bound for the ground state energy.}
By homogeneity,
$\mathrm{GSE}_N(H^B_{N,p})=\lambda_N^{-1}\mathrm{GSE}_N(\widetilde H^B_{N,p})$.
Using $\sup(f+g)\le \sup f+\sup g$ we obtain
\[
\mathrm{GSE}_N(\widetilde H^B_{N,p})
\le \mathrm{GSE}_N(H_N^{(2)})+\mathrm{GSE}_N(H_N^{(K)}).
\]
Moreover, $H_N^{(K)}=J_N H_N^{(3)}$, so $\mathrm{GSE}_N(H_N^{(K)})=J_N\,\mathrm{GSE}_N(H_N^{(3)})=O_{\bbP}(1)$.
Thus $\mathrm{GSE}_N(\widetilde H^B_{N,p})=O_{\bbP}(1)$, and dividing by $\lambda_N$ gives
$\mathrm{GSE}_N(H^B_{N,p})=O_{\bbP}(\lambda_N^{-1})=O_{\bbP}(e_{N,p}^{-1/2})$.

\end{proof}

\subsection{Size of the support for spike $H_N^A$}
In this subsection, we explain that the size of the support $H_N^A$ is small enough and their spin variables are disjoint to each other. We first need to understand the behavior of heavy-tailed statistics to obtain properties about their spin variables. 


\begin{lem}[Lemma 4.4 of \cite{kim2025heavy}]
Let \(X_1,\dots,X_n\) be i.i.d.\ heavy-tailed random variables with exponent \(\alpha\). 
Denote by \(|Y_1| \ge |Y_2| \ge \dots \ge |Y_n|\) their order statistics. \
If $m>n^{\epsilon'}$ and $a>n^{\epsilon}{(n/m)^{1/\alpha}}$ hold for some $\epsilon,\epsilon'>0$, then we have 
\[
    \bbP(|Y_m|>a)\leq \exp(-cm)
\]
for some $c>0$.
\end{lem}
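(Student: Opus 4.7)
The plan is to reduce the order-statistic event to a Binomial tail, bound the Binomial mean via regular variation together with a Potter-type estimate on the slowly varying factor, and conclude with a Chernoff-type inequality. Since $a\mapsto\bbP(|Y_m|>a)$ is nonincreasing, it suffices to establish the bound at the boundary value $a_0:=n^\epsilon(n/m)^{1/\alpha}$; once it holds there, it holds for every larger $a$ a fortiori.

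First, introduce the exceedance count
\[
N_a:=\#\{i\le n:\,|X_i|>a\}\sim\mathrm{Bin}(n,p_a),\qquad p_a:=\bbP(|X_1|>a),
\]
and use the key equivalence of events $\{|Y_m|>a\}=\{N_a\ge m\}$. Writing $p_a=a^{-\alpha}L(a)$ by regular variation and substituting $a=a_0$ directly,
\[
\bbE N_{a_0}=n\,a_0^{-\alpha}L(a_0)=m\,n^{-\alpha\epsilon}L(a_0).
\]

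The next step is to control $L(a_0)$ uniformly along admissible pairs $(n,m)$. Since $1\le m\le n$, one has $n^\epsilon\le a_0\le n^{\epsilon+1/\alpha}$, so $a_0$ lies in a polynomial window in $n$ and $a_0\to\infty$ uniformly. Potter's bound then yields $L(a_0)\le a_0^{\eta_0}$ for any fixed $\eta_0>0$ once $n$ is large; taking $\eta_0:=\alpha\epsilon/(2(\epsilon+1/\alpha))$ produces $L(a_0)\le n^{\alpha\epsilon/2}$, hence the effective estimate $\bbE N_{a_0}\le m\,n^{-\alpha\epsilon/2}$. Applying the crude Binomial tail bound $\bbP(\mathrm{Bin}(n,p)\ge k)\le(enp/k)^k$ with $k=m$,
\[
\bbP(N_{a_0}\ge m)\le\Bigl(\frac{e\,\bbE N_{a_0}}{m}\Bigr)^m\le\bigl(e\,n^{-\alpha\epsilon/2}\bigr)^m\le e^{-m}
\]
for $n$ large enough that $e\,n^{-\alpha\epsilon/2}\le e^{-1}$. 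This yields the lemma with $c=1$.

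\textbf{Main obstacle.} The only genuine work is the uniform control of the slowly varying factor $L(a_0)$: this is where the polynomial bracketing $n^\epsilon\le a_0\le n^{\epsilon+1/\alpha}$ is crucial, since it keeps Potter's bound uniform across admissible $(n,m)$. The hypothesis $m>n^{\epsilon'}$ does not enter the Binomial estimate itself but is what upgrades the bound $e^{-m}$ to the super-polynomial decay $\exp(-n^{\epsilon'})$ in $n$, which is the form in which the lemma is used downstream (e.g.\ in union bounds over polynomially many index tuples).
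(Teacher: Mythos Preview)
The paper does not supply its own proof of this lemma; it is simply quoted as Lemma~4.4 of \cite{kim2025heavy}. There is therefore nothing to compare against at the level of argument.

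Your proof is correct and is the natural one. The reduction $\{|Y_m|>a\}=\{N_a\ge m\}$ with $N_a\sim\mathrm{Bin}(n,p_a)$ is standard, the computation $\bbE N_{a_0}=m\,n^{-\alpha\epsilon}L(a_0)$ is accurate, and the polynomial bracketing $n^\epsilon\le a_0\le n^{\epsilon+1/\alpha}$ is exactly what is needed to make the Potter estimate on $L$ uniform in the pair $(n,m)$. The crude binomial tail $\bbP(\mathrm{Bin}(n,p)\ge k)\le(enp/k)^k$ follows from the union bound over size-$k$ subsets together with $\binom{n}{k}\le(en/k)^k$, and is valid without any restriction on $k$ relative to $np$, so the final step is clean. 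Your closing remark on the role of the hypothesis $m>n^{\epsilon'}$ is also correct: it is not needed for the inequality itself, only to convert $e^{-cm}$ into stretched-exponential decay in $n$, which is how the lemma is invoked in the subsequent corollary and in Lemma~\ref{lem:no-int}.
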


As a corollary, we have 
    \[
    \bbP(|\mathcal S_{N,p
}|>N^{2\varepsilon_0})\leq \exp(-c'n^{\varepsilon_0}).
    \]

\begin{lem}\label{lem:no-int}
    The number of spin variables in $H_N^A$ has size $O(N^{2\varepsilon_0})$ and indices in each $I=\{i_1,\cdots, i_p\}$ are all different and $I\cap J=\emptyset$ for every different tuples of $H_N^A$ with probability $1-O(N^{-\varepsilon_0})$. 
\end{lem}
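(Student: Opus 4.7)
The plan is to combine the quoted corollary $|\mathcal S_{N,p}|\le N^{2\varepsilon_0}$ (holding with probability $1-\exp(-c'N^{\varepsilon_0})$) with two first-moment computations: one ruling out spike tuples with repeated indices, and one ruling out pairs of spike tuples that share a spin coordinate, both within a single layer and across distinct layers. On the intersection of these events the support of $H_N^A$ is the disjoint union $\bigsqcup_p\bigsqcup_{I\in\mathcal S_{N,p}}\{i_1,\dots,i_p\}$, and a union bound over the finitely many layers $2\le p\le P$ then gives the claim.

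The sole probabilistic input is the one-tuple spike probability. Using the regular variation $\bbP(|H_p|>t)=t^{-\alpha_p}L(t)$, the defining relation $d_{N,p}^{-\alpha_p}L(d_{N,p})\asymp M_{N,p}^{-1}$, the scaling $u_{N,p}=b_{N,p}N^{-\varepsilon_0/\alpha_p}$, and the fact that the multiplicity factor $(|\{i_1,\dots,i_p\}|!/p!)^{1/2}\le 1$ only shrinks the tail for tuples with repeats, I obtain for every tuple $I$
\[
\bbP\bigl(|H^{(p)}_I|>u_{N,p}\bigr)=O\bigl(M_{N,p}^{-1}N^{\varepsilon_0}\bigr),
\]
where the slowly varying factor $L(b_{N,p})$ is absorbed into $N^{\varepsilon_0}$ by Potter's bound.

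Next I would run the two Markov estimates. Since the number of $p$-tuples with at least one repeated coordinate is $O(N^{p-1})$, the expected number of spikes with a repeat is $O(N^{p-1})\cdot O(M_{N,p}^{-1}N^{\varepsilon_0})=O(N^{-1+\varepsilon_0})$, which is $O(N^{-\varepsilon_0})$ as soon as $2\varepsilon_0<1$. For overlaps, fix any ordered pair $(p,p')$ and observe that for a fixed multiset $I$ with distinct entries the number of distinct multisets $J$ of size $p'$ with $J\cap I\ne\emptyset$ is at most $p\binom{N+p'-2}{p'-1}=O(N^{p'-1})$. Since couplings attached to distinct multisets are independent (both within and across layers), the expected number of such colliding spike pairs is
\[
M_{N,p}\cdot O(N^{p'-1})\cdot O(M_{N,p}^{-1}N^{\varepsilon_0})\cdot O(M_{N,p'}^{-1}N^{\varepsilon_0})=O(N^{-1+2\varepsilon_0}),
\]
and the sum over the finitely many pairs $(p,p')$ preserves this order. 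A second Markov bound converts it into an event of probability $1-O(N^{-1+2\varepsilon_0})$.

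On the intersection of the three events (order-statistics count, no repeats, no overlap), every spike tuple has pairwise distinct coordinates and is coordinate-disjoint from every other spike tuple, so the support of $H_N^A$ has cardinality $\sum_p p|\mathcal S_{N,p}|=O(N^{2\varepsilon_0})$. This proves the lemma with probability $1-O(N^{-\varepsilon_0})$ provided $\varepsilon_0<1/3$. The only mild obstacle is the bookkeeping: one must fix $\varepsilon_0$ small enough that the pair-overlap estimate remains summable and that the slowly varying $L$ is swallowed by $N^{\varepsilon_0}$; both are routine Potter-bound applications standard in heavy-tailed extremal analysis, so the argument is essentially counting combined with two applications of Markov.
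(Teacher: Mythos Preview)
Your argument is correct and follows essentially the same strategy as the paper: the corollary bounds $|\mathcal S_{N,p}|$, a first-moment bound over the $O(N^{p-1})$ tuples with repeated indices rules out repeats, and a first-moment bound over intersecting pairs rules out overlaps. The only cosmetic difference is that for the overlap step the paper invokes a ``random graph'' lemma from \cite{kim2025heavy} (conditioning on $|\mathcal S_{N,p}|\le N^{2\varepsilon_0}$ and using exchangeability of spike locations, yielding $O(N^{4\varepsilon_0-1})$), whereas you compute the unconditional expected number of colliding spike pairs directly and obtain the slightly sharper $O(N^{-1+2\varepsilon_0})$; your version is more self-contained and also makes the cross-layer case explicit.
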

\begin{proof}
    The corollary above implies that the support of $H_N^A$ has size of $O(N^{2\varepsilon_0})$. The number of tuples having repeat is $O(N^{p-1})$. Hence, the largest element $r_p$ for these tuples satisfy
    \[
        \bbP(|r_p|>N^{(p-1)/\alpha}N^{\epsilon})<N^{-\epsilon/2} .
    \]
     Hence, the probability having repeated tuple is less than $N^{-\epsilon_0}$. 
    Moreover, random graph argument \cite[Lemma 4.2]{kim2025heavy} implies that the probability of having intersection is less than $O(N^{4\varepsilon_0-1})$.
\end{proof}


\subsection{Bulk universality on slices and the covariance shift}

We care about free energy for $H_N^C$ part on a slice cutting on the sphere. Proposition \ref{prop:adapt-sawhney} implies that on each slice $\{(\sigma_1,\dots,\sigma_d)=w\}$ with $\|w\|^2=qN$, the bulk free energy converges to the Gaussian Parisi value for the cavity-shifted covariance $\eta_q$.

In the proof of the critical formula, we will restrict the disintegration integral
to small neighborhoods in the slice variable $x$.
To justify this, we record a simple net argument showing that the bulk free energy of $H_N^C$ on slices
is \emph{uniform} in the slice parameter (away from $q=1$), once the bulk disorder satisfies
the bounded-disorder hypothesis in Proposition \ref{prop:adapt-sawhney}.

We prove this using net-argument and concentration \cite[Lemma~4.4]{sawhney2024free} with its Lipschitz property as Lemma \ref{lem:Lipschitz-slice}.

\paragraph{Slices.}
Fix an integer $d\ge1$.
For $w\in\R^d$ with $\|w\|^2=qN$ for some $q\in[0,1)$, define the slice
\[
\mathcal S_N(w):=\{\sigma\in S_N:\ (\sigma_1,\dots,\sigma_d)=w\}.
\]
Let $\mu_{N,w}$ be the uniform probability measure on $\mathcal S_N(w)$
(i.e.\ the uniform measure on the residual sphere of radius $\sqrt{N(1-q)}$).

Define the slice free energy density
\[
\Phi_N(w):=\frac{1}{N}\log \int_{\mathcal S_N(w)} \exp\big(\beta H_N^{C}(\sigma)\big)\,\dd\mu_{N,w}(\sigma).
\]
Let $\Phi_N^{\mathrm{G}}(w)$ denote the corresponding Gaussian slice free energy
(with Gaussian disorder having the same covariance polynomial $\xi$), and recall
\[
\eta_q(t):=\xi\bigl(q+(1-q)t\bigr)-\xi(q),\qquad t\in[-1,1].
\]
\begin{prop}[Bulk Parisi value on slices]\label{prop:bulk-slices}
Fix $\beta\in(0,\infty)$ and $q\in[0,1)$.
Then, uniformly for $w$ on sphere and for $q$ in compact subsets of $[0,1)$,
\[
\Phi_N(w)
=
\mathcal P(\eta_q;\beta)+o_{\bbP}(1)
\]
where $\eta_q(t)=\xi(q+(1-q)t)-\xi(q)$.
\end{prop}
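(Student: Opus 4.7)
The plan is to (i) reduce the slice free energy $\Phi_N(w)$ to the free energy of a mixed spherical model on the residual sphere $\mathbb{S}^{N-d-1}(\sqrt{N(1-q)})$ with shifted covariance polynomial $\eta_q$ via an explicit cavity decomposition; (ii) apply Proposition~\ref{prop:adapt-sawhney} on the residual sphere to obtain pointwise convergence to $\mathcal{P}(\eta_q;\beta)$; and (iii) upgrade pointwise convergence to uniform convergence in $(w,q)$ via the Lipschitz estimate of Lemma~\ref{lem:Lipschitz-slice} combined with a polynomial net and concentration.

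For the cavity decomposition I would parameterize $\sigma=(w,\tilde\sigma)$ with $\|\tilde\sigma\|^2=N(1-q)$, substitute into each $p$-interaction of $H_N^C$, and group index tuples by the number $m\in\{0,1,\dots,p\}$ of ``spike'' coordinates (those in $\{1,\dots,d\}$) they contain. This writes $H_N^C(w,\tilde\sigma)$ as a deterministic $w$-dependent constant plus a centered mixed Hamiltonian $\tilde H_N(\tilde\sigma)$ of degree $\le P$ in $\tilde\sigma$. A replica covariance computation shows that, in the rescaled variable $\tilde t=\langle\tilde\sigma,\tilde\tau\rangle/(N(1-q))$, the covariance of $\tilde H_N$ equals $N\eta_q(\tilde t)+O(N^{-1})$, with the lower-order term coming from index coincidences and absorbed by the symmetrization convention of \Cref{sec:model}.

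For pointwise convergence I would observe that the effective couplings of $\tilde H_N$ are linear combinations of the original bulk couplings of $H_N^C$ with deterministic coefficients $\prod_{k:i_k\le d}w_{i_k}/N^{(p-1)/2}$. Since $|w_j|\le\sqrt N$ and only $d=O(N^{2\varepsilon_0})$ spike coordinates contribute (Lemma~\ref{lem:no-int}), the effective couplings still satisfy the hypotheses of Proposition~\ref{prop:adapt-sawhney}, namely an a.s.\ truncation at $N^{1/2-\varepsilon}$ and a $2p$-th moment bounded by $CN^\varepsilon$, both inherited from Lemma~\ref{lem:bulk-moment-bounds} and the finite-$2p$-th moment layers. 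Rescaling $\tilde\sigma$ onto $\mathbb{S}^{N-d-1}(\sqrt{N-d})$ and invoking Proposition~\ref{prop:adapt-sawhney} with covariance polynomial $\eta_q$ then gives $\Phi_N(w)\to\mathcal{P}(\eta_q;\beta)$ in $L^1$ and in probability, for each fixed $w$ and $q$.

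The step I expect to be hardest is the uniform upgrade. By Lemma~\ref{lem:Lipschitz-slice}, $w\mapsto\Phi_N(w)$ is Lipschitz with a high-probability polynomial constant $L_N$. I would cover the sphere $\{\|w\|^2=qN\}$ with an $N^{-A}\sqrt N$-net of cardinality $\exp(o(N))$ for $A$ large, then apply the pointwise result together with subexponential concentration of $\Phi_N(w)$ around its mean (Borell--TIS on $\mathcal{S}_N(w)$ for the Gaussian comparison model, and bounded-difference concentration for the truncated non-Gaussian version) and take a union bound; Lipschitz extension fills in the remaining points. Uniformity in $q\in[0,1-\delta]$ follows by a second netting in $q$ combined with continuity of $q\mapsto\eta_q$ and of $\xi\mapsto\mathcal{P}(\xi;\beta)$ in the Crisanti--Sommers functional. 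The delicate point is matching the strength of concentration (subexponential in $N$) against the size of the net; this is precisely where the deterministic truncation at scale $u_{N,p}$ built into the definition of $H_N^B,H_N^C$ is essential, since it enables McDiarmid-type bounds that would not be available without an a.s.\ upper bound on the couplings.
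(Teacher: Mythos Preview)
Your proposal is correct and follows the same three-ingredient template as the paper (slice universality via Proposition~\ref{prop:adapt-sawhney}, Lipschitz control from Lemma~\ref{lem:Lipschitz-slice}, and a net plus concentration argument for uniformity), but it differs from the paper in two places worth flagging.

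First, to identify the limit as $\mathcal P(\eta_q;\beta)$ the paper does \emph{not} carry out your explicit cavity decomposition of the effective couplings. Instead it applies Proposition~\ref{prop:adapt-sawhney} on the slice to compare $\Phi_N(w)$ directly with the \emph{Gaussian} slice free energy $\Phi_N^{\mathrm G}(w)$, and then invokes rotational invariance of the Gaussian model: after an orthogonal change of coordinates the Gaussian Hamiltonian restricted to $\mathcal S_N(w)$ depends only on $q=\|w\|^2/N$ and has covariance $N\eta_q(\cdot)$ by a one-line computation. This bypasses all the bookkeeping of checking that your $w$-dependent effective couplings (which are sums of up to $d^m$ terms) still satisfy the a.s.\ bound and $2p$-moment hypotheses of Proposition~\ref{prop:adapt-sawhney}. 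Your route is more explicit; the paper's is shorter and avoids that verification entirely.

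Second, your concentration step assumes all of $H_N^C$ is deterministically truncated at scale $u_{N,p}$, but the finite-$2p$-moment layers in $H_N^C$ carry no such truncation. The paper closes this by a further split $H_N^C=H_N^{C1}+H_N^{C2}$ at level $N^{1/2-3\varepsilon_0}$: the large-but-rare part $H_N^{C1}$ is shown to be uniformly $O(N^{1-\varepsilon_1})$ via the disjoint-support structure and Lemma~\ref{lem:amgm}, and the bounded-difference concentration is run only on $H_N^{C2}$. Your proposal would need this extra split (or an equivalent high-probability truncation) to make the McDiarmid step go through.
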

\begin{lem}[Lipschitz control of $\Phi_N(w)$ in $w$]\label{lem:Lipschitz-slice}
There exists a constant $C>0$ such that for all $w,w'$ with $\|w\|^2,\|w'\|^2\le (1-\varepsilon)N$,
\[
|\Phi_N(w)-\Phi_N(w')|\le C \frac{\sqrt{d}}{\sqrt N}\|w-w'\|
\]
\end{lem}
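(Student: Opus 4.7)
The plan is to parameterize the slice by the residual unit sphere, differentiate $\Phi_N$ in $w$ under the integral, and bound the resulting gradient by a uniform pointwise estimate on $\nabla_\sigma H_N^C$; integrating along the segment from $w'$ to $w$ then yields the claim. Setting $r(w) = \sqrt{N - \|w\|^2}$ and $\sigma = (w, r(w)\hat\tau)$ with $\hat\tau \in S^{N-d-1}$, the measure $\mu_{N,w}$ is the pushforward of the uniform measure on $S^{N-d-1}$, so
\[
\Phi_N(w) = \frac{1}{N}\log \int_{S^{N-d-1}} \exp\bigl(\beta H_N^C(w, r(w)\hat\tau)\bigr)\,\dd\hat\tau,
\]
and differentiating under the integral gives
\[
\partial_{w_k}\Phi_N(w) = \frac{\beta}{N}\,\bbE_{\mu_{N,w}}^{\beta}\Bigl[\partial_{w_k}\bigl[H_N^C(w, r(w)\hat\tau)\bigr]\Bigr],
\]
where $\bbE_{\mu_{N,w}}^{\beta}$ denotes the Gibbs expectation on the slice.

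Applying the chain rule with $\partial_{w_k} r(w) = -w_k/r(w)$ and using Cauchy--Schwarz together with $\|\hat\tau\| = 1$,
\[
\bigl|\partial_{w_k}[H_N^C(w, r(w)\hat\tau)]\bigr| \le |\partial_{\sigma_k} H_N^C(\sigma)| + \frac{\|w\|}{r(w)}\,\|\nabla_\sigma H_N^C(\sigma)\|.
\]
For $\|w\|^2 \le (1-\varepsilon)N$ one has $\|w\|/r(w) \le \sqrt{(1-\varepsilon)/\varepsilon}$, so the right-hand side is bounded by $C_\varepsilon\,\|\nabla_\sigma H_N^C(\sigma)\|$ uniformly in $\hat\tau$ and $k$.

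The key input is a uniform-in-$\sigma$ gradient bound: with probability $1 - o(1)$, $\sup_{\sigma\in S_N}\|\nabla_\sigma H_N^C(\sigma)\| \le C\sqrt N$. Since the truncated bulk couplings of $H_N^C$ are bounded by $N^{1/2-\varepsilon_1}$ and each partial derivative has variance $O(1)$ at a fixed $\sigma$ (Lemma \ref{lem:bulk-moment-bounds}), Bernstein's inequality gives pointwise exponential concentration; this is promoted to a uniform statement on $S_N$ by a standard net-plus-Lipschitz argument, exploiting that $\sigma \mapsto \|\nabla_\sigma H_N^C(\sigma)\|$ is Lipschitz with polynomial-in-$N$ constant under the coupling bound. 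Combining yields $|\partial_{w_k}\Phi_N(w)| \le C_\varepsilon\beta/\sqrt N$, whence $\|\nabla_w\Phi_N(w)\|_{\R^d} \le C_\varepsilon\beta\sqrt d/\sqrt N$; integrating along $w(t) = (1-t)w' + tw$ completes the proof.

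The main obstacle is the uniform gradient bound above. Pointwise concentration is a direct Bernstein estimate under the bounded-coupling hypothesis, but promoting it to a uniform statement on $S_N$ requires balancing the polynomial-in-$N$ Lipschitz constant of $\nabla_\sigma H_N^C$ against the exponential-in-$N^{\varepsilon_1}$ tail, typically via chaining or a careful net argument of the type used in the Sawhney--Sellke framework (cf.\ \cite{sawhney2024free}). Once this uniform bound is in place, the rest is the chain-rule calculation above, and the Lipschitz constant $\sqrt d/\sqrt N$ emerges as the square root of the ambient dimension $d$ divided by the sphere radius.
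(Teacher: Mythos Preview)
Your proposal is correct and follows essentially the same approach as the paper: both reduce the Lipschitz estimate to a uniform bound on $\nabla_\sigma H_N^C$ over the sphere, obtained via the bounded-disorder concentration from \cite[Lemma~4.4]{sawhney2024free}. The only cosmetic differences are that the paper packages the input as a per-coordinate bound $|\partial_{\sigma_i}H_N^C|\le C\sqrt N$ (viewing each $\partial_{\sigma_i}H_N$ as a lower-degree spin-glass Hamiltonian) and is terser about the slice parameterization, whereas you bound the full gradient norm and carry through the $r(w)$-dependence explicitly; both routes yield the same $\sqrt d/\sqrt N$ constant.
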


\begin{proof}
For the partial derivative of the Hamiltonain
\[
\partial_{\sigma_i} H_N(\sigma) = \frac{1}{\sqrt{N}}H_N^i(\sigma),
\]
we can easily check that $H_N^i$ satisfies the condition for Proposition \ref{prop:adapt-sawhney} and also concentration property for Lemma 4.4 of \cite{sawhney2024free} prove that the $\sup_{\sigma} |H_N^i(\sigma)|\leq CN$ uniformly for every $i$ with probability $1-e^{-cN}$.
This prove that 
\[
    \|H_N(w,\tau)-H_N(w',\tau)\|\leq \frac{C\sqrt{d}}{\sqrt{N}}N\|w-w'\|.
\]
\end{proof}

\begin{lem}[Uniform slice universality on a net]\label{lem:uniform-slice-net}
Fix $\varepsilon\in(0,1)$ and let
\[
\mathcal W_{N,\varepsilon}:=\{w\in\R^d:\ \|w\|^2\le (1-\varepsilon)N\}.
\]
Let $\mathcal N_N\subset \mathcal W_{N,\varepsilon}$ be an $\ell^2$-net of mesh $\rho_N$:
for every $w\in\mathcal W_{N,\varepsilon}$ there exists $\hat w\in\mathcal N_N$ with $\|w-\hat w\|\le \rho_N={\sqrt{N}}/{d}$.
Then, the sliced free energy satisfies
\[
\sup_{\hat w\in\mathcal N_N}\Big|\Phi_N(\hat w)-\bbE\Phi_N(\hat w)\Big|=o_{\bbP}(1).
\]
Also, Gaussian model satisfies
\[
\sup_{\hat w\in\mathcal N_N}\Big|\Phi^G_N(\hat w)-\bbE\Phi^G_N(\hat w)\Big|=o_{\bbP}(1).
\]

\end{lem}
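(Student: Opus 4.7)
The plan is to combine a volume-packing bound for $|\mathcal N_N|$ with a pointwise concentration inequality for $\Phi_N$ around its mean, then close by a union bound over the net. Since $\mathcal W_{N,\varepsilon}$ is contained in the $\R^d$-ball of radius $\sqrt{(1-\varepsilon)N}$ and the mesh is $\rho_N=\sqrt{N}/d$, a standard volume argument gives
\[
|\mathcal N_N|\le (3\sqrt{N}/\rho_N)^d=(3d)^d.
\]
Under the event of \Cref{lem:no-int}, the support size obeys $d=O(N^{2\varepsilon_0})$, so $\log|\mathcal N_N|=O(d\log d)=O(N^{2\varepsilon_0}\log N)$, which can be made arbitrarily small relative to the concentration rate below by choosing $\varepsilon_0$ small at the outset of \Cref{sec:subcritical}.

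Next I would establish pointwise concentration of $\Phi_N(\hat w)$ in the disorder. For each $\hat w$ with $\|\hat w\|^2=qN\le (1-\varepsilon)N$, the slice $\mathcal S_N(\hat w)$ is the sphere of radius $\sqrt{N(1-q)}\ge \sqrt{\varepsilon N}$ in the $(N-d)$-dimensional subspace orthogonal to $I_\star$, and $H_N^C$ restricted to the slice is again a mixed $p$-spin Hamiltonian whose couplings obey the boundedness and $2p$-th moment bounds of \Cref{lem:bulk-moment-bounds}, hence the hypotheses of \Cref{prop:adapt-sawhney}. The bounded-disorder concentration of \cite[Lem.~4.4]{sawhney2024free}, applied on this residual sphere, then yields a sub-Gaussian tail
\[
\bbP\bigl(|\Phi_N(\hat w)-\bbE\Phi_N(\hat w)|>t\bigr)\le \exp(-cN^{\rho}t^2)
\]
for some $\rho,c>0$ independent of $\hat w$; the lower bound $\sqrt{\varepsilon N}$ on the slice radius keeps the constants uniform. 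For the Gaussian version $\Phi_N^G$, the same bound follows directly from Borell--TIS applied to the Gaussian field on the residual sphere, whose Lipschitz constant (in the couplings) is $O(N^{-1/2})$.

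A union bound over the net then gives, for any fixed $t>0$,
\[
\bbP\Bigl(\sup_{\hat w\in\mathcal N_N}|\Phi_N(\hat w)-\bbE\Phi_N(\hat w)|>t\Bigr)
\le |\mathcal N_N|\exp(-cN^{\rho}t^2)
\le \exp\bigl(O(N^{2\varepsilon_0}\log N)-cN^{\rho}t^2\bigr),
\]
which tends to $0$ provided $2\varepsilon_0<\rho$. Letting $t=t_N\downarrow 0$ slowly (for example $t_N=N^{-\rho/4}$) yields the claimed $o_{\bbP}(1)$, and the Gaussian case is identical.

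The main obstacle I expect is verifying that the pointwise concentration rate $\rho$ is genuinely uniform over all slices with $\|\hat w\|^2\le (1-\varepsilon)N$ and strictly dominates the net entropy $2\varepsilon_0$. Uniformity is built in once one applies the residual-sphere version of \cite[Lem.~4.4]{sawhney2024free}, since the residual dimension $N-d=N-O(N^{2\varepsilon_0})$ and radius $\sqrt{\varepsilon N}$ are comparable to the unconstrained setting. The dominance $\rho>2\varepsilon_0$ is then secured by tracking the dependence of $\varepsilon_0$ through \Cref{lem:no-int} and \Cref{lem:bulk-moment-bounds}, shrinking $\varepsilon_0$ once at the start so that the final exponent $\rho-2\varepsilon_0$ remains strictly positive.
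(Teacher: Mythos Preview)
Your overall architecture---entropy bound $|\mathcal N_N|\le (3d)^d=\exp(O(N^{2\varepsilon_0}\log N))$, pointwise subgaussian concentration from \cite[Lem.~4.4]{sawhney2024free}, then union bound---is exactly the paper's. The gap is in the sentence ``$H_N^C$ restricted to the slice \ldots\ obeys the boundedness \ldots\ of \Cref{lem:bulk-moment-bounds}''. That lemma is stated only for the \emph{truncated} heavy-tailed layers, whereas $H_N^C$ also contains the layers with finite $2p$-th moment, for which by construction $H_{N,p}^A=0$ and no truncation was performed. Those couplings are not a.s.\ bounded, so the bounded-disorder concentration lemma does not apply to $\Phi_N$ as written, and your ``some $\rho>0$'' is not yet justified.

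The paper patches this with a second truncation: split $H_N^C=H_N^{C1}+H_N^{C2}$ at the level $N^{1/2-3\varepsilon_0}$. The large piece $H_N^{C1}$ has, with high probability, pairwise disjoint index tuples (the same counting as in \Cref{lem:no-int}, now using the finite $2p$-th moment to bound the number of exceedances), so \Cref{lem:amgm} gives $\sup_\sigma|H_N^{C1}(\sigma)|=O(N^{1-\varepsilon_1})$ on that event and hence $|\Phi_N-\Phi_N'|=O(N^{-\varepsilon_1})$, where $\Phi_N'$ is the slice free energy of $H_N^{C2}$ alone. Now every coupling in $H_N^{C2}$ is deterministically bounded by $N^{1/2-3\varepsilon_0}$, and McDiarmid/Lemma~4.4 yields the explicit rate $\exp(-cN^{6\varepsilon_0}t^2)$. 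With $t_N=N^{-\varepsilon_0}$ this beats the net entropy $O(N^{2\varepsilon_0}\log N)$, so your unspecified $\rho$ becomes $6\varepsilon_0$ and the dominance $\rho>2\varepsilon_0$ is automatic rather than something to be ``secured by tracking dependence''.
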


\begin{proof}
We divide $H_N^C$ as two parts: the interaction above $N^{1/2-3\varepsilon_0}$ part $H_N^{C1}$ and less than $H^{1/2-3\varepsilon_0}$ part $H_N^{C2}$.
For small enough $\varepsilon_0$, we have no intersection of their spin indices in $H_N^{C1}$ as Lemma \ref{lem:no-int}. We can apply Lemma \ref{lem:amgm} and $\max H_N^{C1}(\sigma)=O(N^{1-\varepsilon_1})$ for some $\varepsilon_1>0$ since the elements in $C$ already has an upper bounded. We define $\Phi_N'$ to be the sliced free energy for $H_N^{C2}$ part. Then, $|\Phi_N-\Phi_N'|=O(N^{-\varepsilon_1})$.
Next, by bounded-disorder concentration (e.g.\ \cite[Lemma~4.4]{sawhney2024free}),
for each fixed $\hat w$ we have subgaussian tails: for all $t>0$,
\[
\bbP\Big(|\Phi'_N(\hat w)-\bbE\Phi'_N(\hat w)|\ge t\Big)\le 2e^{-cN^{6\varepsilon_0}t^2}
\]
for some $\varepsilon_1>0$ since every elements are bounded from above $u_{N,p}$.
Taking $t=t_N:=N^{-\varepsilon_0}$ and union bounding over $\hat w\in\mathcal N_N$ gives
\[
\bbP\Big(\sup_{\hat w\in\mathcal N_N}|\Phi'_N(\hat w)-\bbE\Phi'_N(\hat w)|\ge t_N\Big)
\le |\mathcal N_N|\,2e^{-cN^{6\varepsilon_0}t_N^2}=o(1),
\]
since $|\mathcal N_N|=O({\sqrt{N}/\rho_N}^d)=O(\exp (N^{2\varepsilon_0}\log N))$.
The same holds for $\Phi_N^{\mathrm{G}}$.
\end{proof}
Now we can prove Proposition \ref{prop:bulk-slices}.
\begin{proof}[Proof of Proposition \ref{prop:bulk-slices}]
Fix $w$ and let $\hat w\in\mathcal N_N$ with $\|w-\hat w\|\le\rho_N$.
By Lemma \ref{lem:Lipschitz-slice},
\[
|\Phi_N(w)-\Phi_N(\hat w)|\le C\frac{\rho_N\sqrt{d}}{\sqrt N},
\qquad
|\Phi_N^{\mathrm{G}}(w)-\Phi_N^{\mathrm{G}}(\hat w)|\le C\frac{\rho_N\sqrt{d}}{\sqrt N}.
\]
With $\rho_N=\sqrt{N}/d$, the RHS is $o_{\bbP}(1)$.
Thus, we have
\[
\sup_{w\in\mathcal W_{N,\varepsilon}}|\Phi_N(w)-\bbE\Phi_N(w)|
=o_{\bbP}(1),
\]
and 
\[
\sup_{w\in\mathcal W_{N,\varepsilon}}|\Phi^G_N(w)-\bbE\Phi^G_N(w)|
=o_{\bbP}(1).
\]
Due to Proposition \ref{prop:adapt-sawhney}, we have 
\[
    \bbE[|\Phi_N(w)-\bbE\Phi_N^G(w)|]\leq N^{-c_0}.
\]
Furthermore, by rotational invariance of the Gaussian mixed model,
conditioning on $(\sigma_1,\dots,\sigma_d)=w$ and rescaling the remaining coordinates shows the induced
Gaussian field has covariance $\eta_q(t)=\xi(q+(1-q)t)-\xi(q)$, hence
$\Phi_N^{\mathrm{G}}(w)=\mathcal P(\eta_q;\beta)+o(1)$ by the Gaussian Parisi theorem
 uniformly on $q\in[0,1-\varepsilon]$.
Combining all above and we have 
\[
\Phi_N(w)
=
\mathcal P(\eta_q;\beta)+o_{\bbP}(1)
\]
uniformly.
\end{proof}

\subsection{Proof of the critical formula}
We are almost ready to prove our main theorems. Before the proof, we observe maximum of its spike part $H_N^A$.
The spike Hamiltonian $H_N^A$ is supported on disjoint $d$-sets .
At fixed squared mass $q$ on the spike coordinates, AM--GM yields the maximal monomial value. 

\begin{lem}[AM--GM bound]\label{lem:amgm}
For $q\in[0,1]$ and distinct indices $ij$, maximum over the sphere has 
\[
\max_{\substack{x\in\R^d\\ \|x\|^2=q}}\  \sum _{i=1}^m |a_i x_{i1}x_{i2}\cdots x_{ij_i}|
=\max_i |a_i|\left(\frac{q}{j_i}\right)^{j_i/2}.
\]
\end{lem}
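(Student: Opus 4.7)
The plan is to decouple the maximization into two clean steps: (a) an intra-monomial AM--GM bound on each block of coordinates, and (b) a finite-dimensional maximization that distributes the squared mass $q$ across the $m$ monomials. The disjointness of the supports (provided by \Cref{lem:no-int} applied to the spike Hamiltonian $H_N^A$) is what makes this reduction valid.

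First I would introduce, for each $i=1,\ldots,m$, the block $B_i\subset\{1,\ldots,d\}$ of coordinates appearing in the $i$-th monomial, with $|B_i|=j_i$, and set
\[
q_i:=\sum_{k\in B_i} x_{ik}^2\ge 0.
\]
Since the $B_i$ are pairwise disjoint and the objective depends only on coordinates in $\bigcup_i B_i$, any mass allocated outside $\bigcup_i B_i$ only wastes budget; so we may assume $\sum_{i=1}^m q_i = q$. On each block, the standard AM--GM inequality applied to $x_{i1}^2,\ldots,x_{ij_i}^2$ gives
\[
|x_{i1}x_{i2}\cdots x_{ij_i}|
\;\le\;
\Bigl(\frac{q_i}{j_i}\Bigr)^{j_i/2},
\]
with equality when $|x_{ik}|=\sqrt{q_i/j_i}$ for every $k\in B_i$; the signs of the $x_{ik}$ can then be chosen independently across blocks to align with $\operatorname{sgn}(a_i)$. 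This reduces the original problem to maximizing
\[
F(q_1,\ldots,q_m):=\sum_{i=1}^m |a_i|\,\Bigl(\frac{q_i}{j_i}\Bigr)^{j_i/2}
\]
over the simplex $\Delta_q:=\{(q_i)_{i\le m}:q_i\ge 0,\ \sum_i q_i=q\}$.

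Finally, since each spike monomial has degree $j_i\ge 2$, the map $t\mapsto t^{j_i/2}$ is convex on $[0,\infty)$, so $F$ is a sum of convex functions and hence convex on $\Delta_q$. A convex function on a simplex attains its maximum at a vertex, i.e.\ at a configuration $q_{i^\star}=q$ and $q_i=0$ for $i\ne i^\star$. Substituting this and optimizing over $i^\star\in\{1,\ldots,m\}$ yields
\[
\max_{\|x\|^2=q}\,\sum_{i=1}^m |a_i\,x_{i1}\cdots x_{ij_i}|
\;=\;\max_{1\le i\le m}|a_i|\Bigl(\frac{q}{j_i}\Bigr)^{j_i/2},
\]
as claimed. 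No step is technically subtle; the only things worth double-checking are the convexity of $t^{j_i/2}$ for $j_i\ge 2$ (which is immediate from $\frac{j_i}{2}\ge 1$) and the disjointness of blocks, which is exactly the content of \Cref{lem:no-int}. If one wanted to allow $j_i=1$ (not needed here, since $p\ge 2$ in our setting), the convexity argument would fail for those indices and the maximizer would generically split mass across the degree-$1$ monomials; this does not occur in our application.
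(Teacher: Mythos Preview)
Your proof is correct and follows essentially the same two-step argument as the paper: apply AM--GM within each block to get $|x_{i1}\cdots x_{ij_i}|\le (q_i/j_i)^{j_i/2}$, then use convexity of $t\mapsto t^{j_i/2}$ to conclude that the optimal allocation of squared mass across disjoint blocks concentrates on a single monomial. The paper's proof is a two-sentence sketch of exactly these ideas; you have simply written them out in full, including the explicit simplex reduction and the vertex-maximization principle for convex functions.
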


\begin{proof}
AM--GM gives $|x_1x_2\cdots x_p|\leq (\frac{\sum x_i^2}{p})^{p/2}$, with equality for all $|x_{i}|$ are same. Convexity of $q\mapsto q^{p/2}$ implies that among disjoint spikes, it is optimal to allocate squared mass to a single largest spike.
\end{proof}


Now, we can prove our main theorem.
\begin{proof}[Proof of \Cref{thm:pure-main} and \Cref{thm:mixed-main}] Due to Lemma \ref{lem:bulk-negligible}, we can prove that $H_N^B$ part is negligible for the free energy. We observe remaining part here.
Write $H'_{N}=H^{A}_N+H^{C}_N$ for the spike/bulk split. 

\emph{Step 1: disintegration and reduction to a one-dimensional integral.}
We can calculate free energy by integral by slices of its sphere. For $x:=(\sigma_1,\dots,\sigma_d)/\sqrt N$ and $q:=\|x\|^2\in[0,1)$,
up to a normalizing constant $C_{N,d}$, the partition function can be written as
\begin{equation}\label{eq:crit-disintegration-Z}
Z_{N,\beta}(H_N')
=
C_{N,d}\int_{\|x\|\le1}(1-\|x\|^2)^{\frac{N-d-2}{2}}\|x\|^{d-1}
\exp(\beta H_N^A)\,
Z^{C,q}_{N,\beta}(x)\,\dd x,
\end{equation}
where $Z^{C,q}_{N,\beta}(x)$ is the partition function for $H_N^C$ on the slice sphere of radius $\sqrt{N(1-q)}$ obtained by freezing the first $d$ coordinates:
\[
    Z^{C,q}_{N,\beta}(x)=\int_{\mathcal S_N(w)} \exp\big(\beta H_N^{C}(\sigma)\big)\,\dd\mu_{N,w}(\sigma).
\]

\emph{Step 2: spike optimization on a slice.}
By Lemma \ref{lem:amgm}, for fixed $q=\|x\|^2$ we have
\begin{equation}\label{eq:Abound}
    H_N^A\leq E_N^{\rm NIM}(q) 
\end{equation}
where
\[
E_N^{\rm NIM}(q)=\max\left\{\max_{c_{p_{\star}}<\infty }\beta |\gamma_{p_\star}|c_{N,p_{\star}}\,\Lambda_{N,p_\star}\left(\frac{q}{p_\star}\right)^{p_\star/2},\max_{c_{p_{\star}=\infty }}\beta |\gamma_{p_\star}|\,\Lambda_{N,p_\star}\left(\frac{q}{p_\star}\right)^{p_\star/2} \right\}
\]
with equality at $|\sigma_1|=\cdots =|\sigma_{p}|=\sqrt{q/p}$ by assuming the corresponding maximum spin pair to be $\{1,\cdots, p\}$ 

\emph{Step 3: bulk free energy on a slice.}
Fix $\varepsilon\in(0,1)$ (arbitrarily small). Since $\frac12\log(1-q)\to-\infty$ as $q\uparrow1$,
the contribution to the disintegration integral \eqref{eq:crit-disintegration-Z} from $\|x\|^2>1-\varepsilon$
is exponentially negligible and does not affect the free energy at the $o(1)$ scale.
On the remaining domain $\|x\|^2\le 1-\varepsilon$, the uniform slice universality
Proposition \ref{prop:bulk-slices}
yields
\begin{equation}\label{eq:crit-bulk-slice}
\frac1N\log Z^{C,q}_{N,\beta}(x)
=
\mathcal P\bigl(\eta_q;\beta\bigr)+o_{\bbP}(1),
\qquad q=\|x\|^2,\quad \eta_q(t)=(q+(1-q)t)^p-q^p,
\end{equation}
uniformly over all $x$ with $\|x\|^2\le 1-\varepsilon$.
The disintegration density contributes
$\frac1N\log(1-q)^{\frac{N-d-2}{2}}=\frac12\log(1-q)+O(d/N)$.

Putting \eqref{eq:crit-disintegration-Z}--\eqref{eq:crit-bulk-slice} together, we obtain the upper bound
\[
F_{N,\beta}(\bar H_{N})
\le
\sup_{q\in[0,1)}\Bigl\{\frac12\log(1-q)
+\mathcal P\bigl(\eta_q;\beta\bigr)
+E_N^{\rm NIM}(q)\Bigr\}
+o_{\bbP}(1),
\]
by the elementary inequality $\frac1N\log\int \exp(N\phi)\le \sup\phi$.

\emph{Step 4: matching lower bound.}
Fix $q\in(0,1)$ and choose a point $x^{(q)}$ with corresponding maximum pair $x^{(q)}_1=\cdots=x_p^{(q)}=\sqrt{q/p}$ for $1\le i\le p$ (and matching signs).
Restrict the integral \eqref{eq:crit-disintegration-Z} to a small Euclidean ball $B(x^{(q)},\delta)$,
where $\delta=\delta_N\downarrow0$ slowly on $d$-dimensional slice.
The proportion of the volume of $B(x^{(q)},\delta)$ contributes at least $C\delta ^d $ in the $d$-dimensional sphere.
Using the uniformity in Proposition \ref{prop:bulk-slices} (and hence in \eqref{eq:crit-bulk-slice}) on the ball $B(x^{(q)},\delta)$,
we obtain
\[
F_{N,\beta}(\bar H_{N})
\ge
\frac12\log(1-q)
+\mathcal P\bigl(\eta_q;\beta\bigr)
+\beta\Lambda_{N,p}c_{N,p}\Bigl(\frac{q}{p}\Bigr)^{p/2}
+o_{\bbP}(1)+\frac{1}{N}\log (C\delta^d).
\]
Taking the supremum over $q$ yields the desired lower bound and completes the proof.
\end{proof}

\begin{rem}[Slice-variational interpretation]\label{rem:critical-interpret}
The optimization in \eqref{eq:critical-pure-formula} is over the squared mass $q$ assigned to the spike coordinates.
The term $\frac12\log(1-q)$ is the entropic cost of putting mass $q$ on a fixed $p$-dimensional coordinate block,
the term $\beta\Lambda_{N,p}(q/p)^{p/2}$ is the optimal spike energy on that block,
and the Parisi term $\mathcal P(\eta_q;\beta)$ is the bulk free energy on the remaining $(1-q)$-sphere,
with the \emph{cross interactions} between spike coordinates and the rest encoded by the covariance shift $\eta_q$.
This is the rigorous version of the heuristic ``maximize over the split between spike and bulk''.
\end{rem}



\section*{Acknowledgments}
The author thanks Ji Oon Lee, Michel Talagrand, G\'erard Ben Arous, and Eliran Subag for many helpful discussions.
This work was supported by the KAIST Jang Young Sil Fellow Program and partially supported by the National Research Foundation of Korea (NRF-2019R1A5A1028324 and NRF-2023R1A2C1005843).


\begin{thebibliography}{10}

\bibitem{arous2024shattering}
G{\'e}rard~Ben Arous and Aukosh Jagannath.
\newblock Shattering versus metastability in spin glasses.
\newblock {\em Communications on Pure and Applied Mathematics}, 77(1):139--176, 2024.

\bibitem{AuffingerBenArousPeche2009}
Antonio Auffinger, G{\'e}rard Ben~Arous, and Sandrine P{\'e}ch{\'e}.
\newblock Poisson convergence for the largest eigenvalues of heavy tailed random matrices.
\newblock {\em Annales de l'Institut Henri Poincar{\'e} Probabilit{\'e}s et Statistiques}, 45(3):589--610, 2009.

\bibitem{AuffingerChen2016}
Antonio Auffinger and Wei-Kuo Chen.
\newblock Universality of chaos and ultrametricity in mixed $p$-spin models.
\newblock {\em Communications on Pure and Applied Mathematics}, 69(11):2107--2130, 2016.

\bibitem{BelinschiDemboGuionnet2009}
Serban~T. Belinschi, Amir Dembo, and Alice Guionnet.
\newblock Spectral measure of heavy tailed band and covariance random matrices.
\newblock {\em Communications in Mathematical Physics}, 289(3):1023--1055, 2009.

\bibitem{BeliusKistler2019}
David Belius and Nicola Kistler.
\newblock The {TAP}--{P}lefka variational principle for the spherical {SK} model.
\newblock {\em Communications in Mathematical Physics}, 367:991--1017, 2019.

\bibitem{BenArousGuionnet2008}
G{\'e}rard Ben~Arous and Alice Guionnet.
\newblock The spectrum of heavy tailed random matrices.
\newblock {\em Communications in Mathematical Physics}, 278(3):715--751, 2008.

\bibitem{BinghamGoldieTeugels1987}
N.~H. Bingham, C.~M. Goldie, and J.~L. Teugels.
\newblock {\em Regular Variation}.
\newblock Cambridge University Press, 1987.

\bibitem{BordenaveGuionnet2013}
Charles Bordenave and Alice Guionnet.
\newblock Localization and delocalization of eigenvectors for heavy-tailed random matrices.
\newblock {\em Probability Theory and Related Fields}, 157(3--4):885--953, 2013.

\bibitem{cannella1972magnetic}
V.~Cannella and J.~A. Mydosh.
\newblock Magnetic ordering in gold-iron alloys.
\newblock {\em Physical Review B}, 6(11):4220--4237, 1972.

\bibitem{CarmonaHu2006}
Philippe Carmona and Yueyun Hu.
\newblock Universality in {S}herrington-{K}irkpatrick's spin glass model.
\newblock {\em Annales de l'Institut Henri Poincar\'e, Probabilit\'es et Statistiques}, 42(2):215--222, 2006.

\bibitem{ChenKimSen2025}
Wei-Kuo Chen, Heejune Kim, and Arnab Sen.
\newblock Some rigorous results on the l{\'e}vy spin glass model.
\newblock {\em Communications in Mathematical Physics}, 406, 2025.
\newblock Article number 68.

\bibitem{chen2023generalized}
Wei-Kuo Chen, Dmitry Panchenko, and Eliran Subag.
\newblock Generalized tap free energy.
\newblock {\em Communications on Pure and Applied Mathematics}, 76(7):1329--1415, 2023.

\bibitem{CizeauBouchaud1993}
P.~Cizeau and J.-P. Bouchaud.
\newblock Mean field theory of dilute spin-glasses with power-law interactions.
\newblock {\em Journal of Physics A: Mathematical and General}, 26(5):L187--L193, 1993.

\bibitem{crisanti1992spherical}
Andrea Crisanti and H.-J. Sommers.
\newblock The spherical $p$-spin interaction spin glass model: the statics.
\newblock {\em Zeitschrift f{\"u}r Physik B Condensed Matter}, 87(3):341--354, 1992.

\bibitem{crisanti1995thouless}
Andrea Crisanti and H-J Sommers.
\newblock Thouless-anderson-palmer approach to the spherical p-spin spin glass model.
\newblock {\em Journal de Physique I}, 5(7):805--813, 1995.

\bibitem{dehaan2006extreme}
Laurens de~Haan and Ana Ferreira.
\newblock {\em Extreme Value Theory: An Introduction}.
\newblock Springer, 2006.

\bibitem{GuionnetPiccolo2026}
Alice Guionnet and Andrea Piccolo.
\newblock Global law of conjugate kernel random matrices with heavy-tailed weights.
\newblock {\em Electronic Journal of Probability}, 31, 2026.

\bibitem{JagannathLopatto2024}
Aukosh Jagannath and Patrick Lopatto.
\newblock Existence of the free energy for heavy-tailed spin glasses.
\newblock {\em Communications in Mathematical Physics}, 405(10), 2024.

\bibitem{JanzenEngelMezard2010}
K.~Janzen, A.~Engel, and M.~M{\'e}zard.
\newblock Thermodynamics of the {L}\'evy spin glass.
\newblock {\em Physical Review E}, 82:021127, 2010.

\bibitem{JiaoKellerRessel2024}
Zhe Jiao and Martin Keller-Ressel.
\newblock Emergence of heavy tails in homogenized stochastic gradient descent.
\newblock In {\em Advances in Neural Information Processing Systems 37 ({NeurIPS} 2024)}, 2024.
\newblock Also available as arXiv:2402.01382.

\bibitem{kim2025heavy}
Taegyun Kim.
\newblock Heavy-tailed mixed p-spin spherical model: Breakdown of ultrametricity and failure of the parisi formula.
\newblock {\em arXiv preprint arXiv:2506.23987}, 2025.

\bibitem{lesieur2017statistical}
Thibault Lesieur, L{\'e}o Miolane, Marc Lelarge, Florent Krzakala, and Lenka Zdeborov{\'a}.
\newblock Statistical and computational phase transitions in spiked tensor estimation.
\newblock In {\em 2017 ieee international symposium on information theory (isit)}, pages 511--515. IEEE, 2017.

\bibitem{MartinMahoney2021}
Charles~H. Martin and Michael~W. Mahoney.
\newblock Implicit self-regularization in deep neural networks.
\newblock {\em Journal of Machine Learning Research}, 22(90):1--72, 2021.

\bibitem{mezard1984nature}
Marc M{\'e}zard, Giorgio Parisi, Nicolas Sourlas, G{\'e}rard Toulouse, and Miguel Virasoro.
\newblock Nature of the spin-glass phase.
\newblock {\em Physical review letters}, 52(13):1156, 1984.

\bibitem{MontanariRichard2014}
Andrea Montanari and Emile Richard.
\newblock A statistical model for tensor {PCA}.
\newblock In {\em Advances in Neural Information Processing Systems (NeurIPS)}, 2014.

\bibitem{panchenko2013sherrington}
Dmitry Panchenko.
\newblock {\em The Sherrington--Kirkpatrick Model}.
\newblock Springer, 2013.

\bibitem{Parisi1979PRL}
Giorgio Parisi.
\newblock Infinite number of order parameters for spin-glasses.
\newblock {\em Physical Review Letters}, 43(23):1754--1756, 1979.

\bibitem{resnick1987extreme}
Sidney~I. Resnick.
\newblock {\em Extreme Values, Regular Variation, and Point Processes}.
\newblock Springer, 1987.

\bibitem{Resnick2007}
Sidney~I. Resnick.
\newblock {\em Heavy-Tail Phenomena: Probabilistic and Statistical Modeling}.
\newblock Springer, 2007.

\bibitem{sawhney2024free}
Mehtaab Sawhney and Mark Sellke.
\newblock Free energy universality of spherical spin glasses, 2024.
\newblock arXiv:2408.13701.

\bibitem{SK1975}
David Sherrington and Scott Kirkpatrick.
\newblock Solvable model of a spin-glass.
\newblock {\em Physical Review Letters}, 35(26):1792--1796, 1975.

\bibitem{SimsekliSagunGurbuzbalaban2019}
Umut Simsekli, Levent Sagun, and Mert Gurbuzbalaban.
\newblock A tail-index analysis of stochastic gradient noise in deep neural networks.
\newblock In {\em Proceedings of the 36th International Conference on Machine Learning}, volume~97 of {\em Proceedings of Machine Learning Research}, pages 5827--5837, 2019.

\bibitem{Soshnikov2004}
Alexander Soshnikov.
\newblock Poisson statistics for the largest eigenvalues of {Wigner} random matrices with heavy tails.
\newblock {\em Electronic Communications in Probability}, 9:82--91, 2004.

\bibitem{subag2018free}
Eliran Subag.
\newblock Free energy landscapes in spherical spin glasses.
\newblock {\em arXiv preprint arXiv:1804.10576}, 2018.

\bibitem{talagrand2003spin}
Michel Talagrand.
\newblock {\em Spin Glasses: A Challenge for Mathematicians}.
\newblock Springer, 2003.

\bibitem{talagrand2006spherical}
Michel Talagrand.
\newblock Free energy of the spherical mean field model.
\newblock {\em Probability Theory and Related Fields}, 134(3):339--382, 2006.

\bibitem{talagrand2011mean}
Michel Talagrand.
\newblock {\em Mean Field Models for Spin Glasses. Volume I}.
\newblock Springer, 2011.

\end{thebibliography}
\end{document}